\newcommand{\lebn}
\theoremstyle{plain}
\newtheorem{thm}[equation]{Theorem}
\newtheorem*{thm1}{Theorem \ref{thm1}}
\newtheorem{cor}[equation]{Corollary}
\newtheorem{lem}[equation]{Lemma}
\theoremstyle{definition}
\newtheorem{defn}[equation]{Definition}
\newtheorem{rem}[equation]{Remark}
\newtheorem{exmp}[equation]{Example}
\numberwithin{equation}{section}
\newcommand{\ME}{\mathcal{M}}
\newcommand{\DE}{\mathcal{D}}
\newcommand{\FE}{\mathcal{F}}
\newcommand{\IE}{\operatorname{Ind}}
\newcommand{\KE}{\mathcal{K}}
\newcommand{\ind}{\operatorname{ind}}
\newcommand{\m}{\operatorname{m}}
\newcommand{\lk}{\operatorname{link}}
\newcommand{\dl}{\operatorname{del}}
\newcommand{\ve}{\operatorname{v}}
\newcommand{\Dom}{\operatorname{Dom}}
\newcommand{\D}{\Delta}
\newcommand{\iso}{\cong}
\begin{document}

\bibliographystyle{plain}

\title[Matching Trees and Homotopy Type Of Devoid Complexes Of Graphs]{Matching Trees for Simplicial Complexes and Homotopy Type Of Devoid Complexes Of Graphs}
\author{Demet Taylan}
\address{Department of Mathematics, University Of Bremen, 28359 Bremen, Germany}
\email{dtaylan@informatik.uni-bremen.de}

\address{Department of Mathematics, Bozok University, Yozgat, 66900, Turkey.}
\email{demet.taylan@bozok.edu.tr}

\keywords{Discrete Morse theory, homotopy type, independence complex, devoid complex, graphs, simplicial complex.}

\date{\today}

\thanks{The author is supported by T\" UB\. ITAK,
grant no: 111T704.}

\subjclass[2000]{05C69, 05C70, 55U10.}

\begin{abstract} 
We generalize some homotopy calculation techniques such as splittings and matching trees that are introduced for the 
computations in the case of the independence complexes of graphs to arbitrary simplicial complexes, and exemplify their efficiency on some
simplicial complexes, the devoid complexes of graphs, $\DE(G;\FE)$ whose faces are vertex subsets of $G$ that induce $\FE$-free subgraphs,
where $G$ is a multigraph and $\FE$ is a family of multigraphs. Additionally, we compute the homotopy type of dominance complexes of chordal graphs.
\end{abstract}

\maketitle

\section{Introduction}
In recent years the efficiency of topological methods on solving combinatorial problems has been demonstrated in various papers,
whose starting point lies in Lov\'asz's proof of the Kneser's conjecture.
In this guise, determining the homotopy types of simplicial complexes plays a crucial role. When a simplicial complex $\D$ is flag, that is,
if it is the independence complex $\D=\IE(G)$ of a graph $G$, the underlying combinatorial structure of $G$ can provide enough information that may ease
the calculation of the homotopy type. For instance, there exists a reduction technique that splits the independence complex homotopically 
into a wedge of smaller ones, namely that $\IE(G)\simeq \IE(G-x)\vee \Sigma(\IE(G-N_G[x])$ for any vertex $x$ of the graph that possesses a neighborhood
$y$ satisfying $N_G[y]\subseteq N_G[x]$ in $G$~\cite{A,MT}. Similarly, in the case of the independence complexes,
Bousquet-M\'elou, et al. in \cite{BLN} have introduced the notion of a matching tree
in order to construct a Morse matching for $\IE(G)$ and used it to compute the homotopy types of independence complexes of some grid graphs.  

Our primary aim here is to generalize these techniques that may be in help to compute the homotopy types of non-flag simplicial complexes as well.  
For example, for an arbitrary simplicial 
complex $\D$ and a subset $A\subseteq V(\D)$, if we define $\DE(A):=\{F\in \D\colon F\cup A\notin \D\}$, then we have the homotopy equivalence 
$\D\simeq \dl_{\D}(p)$ provided that there exists a vertex $q$ other than $p$ satisfying $\DE(\{q\})\subseteq \DE(\{p\})$. Similarly, we introduce the
notion of matching trees for an arbitrary simplicial complex by considering the sets $\DE(A)$ that enables us to construct a Morse matching for such a complex.

We exhibit the effectiveness of such generalizations by computing the homotopy types of certain non-flag simplicial complexes.
The complexes that we consider are again parametrized by graphs. To be more specific, if $G$ is a (multi)graph and $\FE$ is a family of
(multi)graphs, we then define the \emph{devoid complex} $\DE(G;\FE)$ to be the simplicial complex on $V(G)$ whose faces are those
subsets $D\subseteq V(G)$ such that $G[D]$ is $\FE$-free (see Section~\ref{section:reduct} for details). As an application, we compute the homotopy type of
$\DE(C_n;P_k)$ as follows:

\begin{thm1} For the devoid complex $\DE(C_n;P_k)$, the following homotopy equivalence holds:
\begin{equation*}
\DE(C_n;P_k)\simeq
\begin{cases}
\bigvee^k S^{t(k-1)-1}, & \text{if $n=(k+1)t$},\\
S^{t(k-1)-1}, & \text{if $n=(k+1)t+1$},\\
S^{t(k-1)+d-2}, & \text{if $n=(k+1)t+d$},\\
S^{t(k-1)+k-2}, & \text{if $n=(k+1)t+k$},\\
\end{cases}
\end{equation*}
where $2\leq d\leq k-1$.
\end{thm1}   

We further compute the homotopy types of $\DE(P_n;P_k)$ (Theorem~\ref{thm:paths}) and $\DE(F;P_3)$ (Theorem~\ref{thm:forest}), where $F$ is a forest.

The paper is organized as follows. In Section $2$ we recall some basic notions about graphs and simplicial complexes, and collect some necessary topological background. 
We review the basics of the discrete Morse theory, and recall the construction of Morse matchings via \emph{matching trees} for independence complexes of graphs which 
was introduced in \cite{BLN}. In Section $3$ we describe some homotopy reduction techniques for arbitrary simplicial complexes, and apply them to calculate the homotopy 
type of some devoid complexes. In Section $4$ we construct a \emph{matching tree} as a tool to find Morse matchings for arbitrary simplicial complexes, and 
apply this machinery to determine the homotopy type of some particular simplicial complexes in Section $5$.

\section{Preliminaries}\label{pre}
In this section we recall some general notions and exhibit the tools from discrete Morse theory.
\subsection{Graphs}
By a graph $G$ we mean an undirected (multi)graph. If $G$ is a graph, $V(G)$ and $E(G)$ (or simply $V$ and $E$) denote its vertex and edge sets. An edge between $u$ and $v$ is denoted by $e=(u,v)$. A forest is a cycle-free graph, while a tree is a connected forest. If $U\subseteq V$ then $G\setminus U$ is the graph induced on the vertex set $V\setminus U$. We abbreviate $G\setminus \{x\}$ to $G\setminus x$. The subgraph of $G$ that is induced by $U$ will be denoted by $G[U]$. The degree of a vertex $x$ in $G$ will be denoted by $d_G(x)$. A vertex $x$ of $G$ is called \emph{discrete} if $d_G(x)=0$. We denote the set of neighbors of a vertex $x$ of $G$ by $N_G(x)$ (or $N(x)$). The closed neighborhood of a vertex $x$ of $G$ is denoted by $N_G[x]$ (or $N[x]$) and $N[x]=N(x)\cup \{x\}$. If two graphs $G$ and $H$ are isomorphic, we denote it by $G\iso H$. A graph is called $F$-free if it contains no subgraph which is isomorphic to $F$,
and if $\FE$ is a family of graphs, then a graph $G$ is said to be $\FE$-free, if $G$ is $F$-free for each $F\in \FE$.

A subset $I$ of the vertex set $V$ of $G$ is called \emph{independent} if no two vertices of $I$ are adjacent. A matching of $G$ is a set of pairwise disjoint edges. Maximum size of a \emph{matching} of $G$ is called the \emph{matching number} of $G$ and denoted by $\m(G)$. A subset $D$ of $V$ is said to be a \emph{dominating set} in $G$ if every vertex not in $D$ is adjacent to at least one vertex of $D$. A \emph{vertex cover} of $G$ is a subset $C\subseteq V$ such that every edge of $G$ contains a vertex of $C$. The \emph{vertex covering number} of a graph $G$ is the minimum size of a vertex cover of $G$ and it is denoted by $\ve(G)$.

Throughout $K_n$, $C_n$, $P_n$ will denote the complete, cycle and path graphs on $n$ vertices, respectively. Also $K_{m,n}$ denotes the complete bipartite graph with partitions of size $m$ and $n$. In particular, a $2$-cycle $C_2$ corresponds to a double-edge.

The following definition seems to first appear in \cite{AHK}.
\begin{defn}Let $F$ and $G$ be graphs. An $F$-\emph{matching} in $G$ is a set of pairwise vertex disjoint copies of $F$. 
An \emph{induced} $F$-matching in $G$ is an $F$-matching such that no additional edge of $G$ is spanned by the vertices 
of $G$ covered by the matching. Note that a matching is a $K_2$-matching. We will denote the maximum size of an induced 
$F$-matching of a graph $G$ by $\ind_F(G)$. In the particular case where $F=P_k$, we write $\ind_k(G)$ instead of $\ind_{P_k}(G)$ for any $k\geq 3$.
\end{defn}

A subset $S\subseteq V$ is called a {\it complete} of $G$ if $G[S]$ is isomorphic to a complete graph. 
In particular, a complete that is maximal with respect to inclusion is called a \emph{clique} of $G$.
A graph $G$ is \emph{chordal} if every induced cycle in $G$ has length at most $3$. 
A \emph{simplicial vertex} is a vertex $v$ such that $N[v]$ is a clique. Every chordal graph has a simplicial vertex due to Dirac \cite{Dir}.

\subsection{Simplicial Complexes}
An \emph {abstract simplicial complex} $\Delta$ on a finite vertex set $V$ is a set of subsets of $V$, called \emph{faces}, satisfying the following properties:
\begin{enumerate}
\item $\{v\}\in \Delta$ for all $v\in V$.
\item If $F\in\Delta$ and $H\subseteq F$, then $H\in\Delta$.
\end{enumerate}

For a given a subset $U\subset V$, the complex $\Delta[U]:=\{\sigma\colon \sigma\in \Delta\;\textrm{,}\;\sigma\subseteq U\}$ is called the \emph{induced subcomplex} by $U$. 
If two simplicial complexes $\Delta$ and $\Delta'$ are isomorphic, we denote it by $\Delta\iso \Delta'$.
A simplicial complex $\Delta$ is called \emph{flag} if each of its minimal non-faces consists of two elements.

Let $\Delta$ be a simplicial complex. For a given face $\sigma$, the \emph{link} $\lk_\Delta(\sigma)$ and the \emph{deletion} $\dl_\Delta(\sigma)$ 
are defined respectively by $\lk_\Delta(\sigma)=\{\tau\in\Delta\colon\tau\cap \sigma=\emptyset\;\textrm{and}\;\tau\cup\sigma\in\Delta\}$ and 
$\dl_\Delta(\sigma)=\{\tau\in\Delta\colon\tau\cap\sigma=\emptyset\}$. For a vertex $x$ in $\Delta$, we abbreviate $\dl_{\Delta}(\{x\})$ and 
$\lk_\Delta(\{x\})$ to $\dl_{\Delta}(x)$ and $\lk_{\Delta}(x)$ respectively. 

The \emph{independence complex} of a graph $G=(V,E)$ is the simplicial complex on $V$ consisting independent sets of $G$ and is denoted by $\IE(G)$. 
The \emph{dominance complex} of a graph $G=(V,E)$ is the simplicial complex $\Dom(G)=\{\sigma\colon V\setminus\sigma\;\textrm{is a dominating set of}\;G\}$. 
Equivalently, the minimal non-faces of $\Dom(G)$ are the minimal elements of $\{N[x]\colon x\in V\}$.

Throughout this paper, $S^n$ will denote the $n$-dimensional sphere. If two topological spaces $X$ and $Y$ are homotopy equivalent, 
we denote it by $X \simeq Y$. The join and the wedge of two simplicial complexes $\Delta_0$ and $\Delta_1$ are denoted by 
$\Delta_0*\Delta_1$ and $\Delta_0\vee\Delta_1$, respectively. A simplicial complex $\Delta$ on $V$ is a cone with apex 
$v\in V$ if for every $\sigma\in\Delta$ we have $\sigma\cup \{v\}\in\Delta$. A well known fact is that if a simplicial 
complex $\Delta$ is a cone with apex $v$, then it is contractible. The suspension of a simplicial complex $\Delta$ and 
the cone over $\Delta$ will be denoted by $\Sigma \Delta$ and $Cone(\Delta)$, respectively.

The followings are well-known in Combinatorial Topology \cite{AH,Bj,Jo}. 
\begin{thm}\label{T1}
Let $\Delta$ be a simplicial complex. If $\lk_{\Delta}(x)$ is contractible in $\dl_{\Delta}(x)$ 
for some vertex $x$ of $\Delta$, then $\Delta\simeq \dl_{\Delta}(x)\vee\Sigma \lk_{\Delta}(x)$ holds.
\end{thm}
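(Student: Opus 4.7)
\emph{Plan.} Set $Y := \dl_\Delta(x)$ and $L := \lk_\Delta(x)$. The idea is to realize $\Delta$ as $Y$ with a cone on $L$ attached along the base $L$, then use the null-homotopy hypothesis to slide the attaching map to a constant, producing the wedge $Y \vee \Sigma L$.

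First I would decompose $\Delta$. The closed star $\overline{\operatorname{star}}_\Delta(x) := \{\sigma \in \Delta : \sigma \cup \{x\} \in \Delta\}$ equals the join $\{x\} * L$, which is a cone $CL$ on $L$ with apex $x$. One checks immediately that $\Delta = Y \cup \overline{\operatorname{star}}_\Delta(x)$ and $Y \cap \overline{\operatorname{star}}_\Delta(x) = L$, so $\Delta$ is the adjunction space $Y \cup_\iota CL$, where $\iota \colon L \hookrightarrow Y$ denotes the inclusion.

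Next, the hypothesis that $L$ is contractible in $Y$ says precisely that $\iota$ is null-homotopic; let $c_{y_0}\colon L \to \{y_0\} \subseteq Y$ be a constant map to which $\iota$ is homotopic. Since $(CL, L)$ is a simplicial pair, the inclusion $L \hookrightarrow CL$ is a cofibration, and the standard homotopy-invariance of adjunction spaces under homotopic attaching maps gives
\[
\Delta \;=\; Y \cup_\iota CL \;\simeq\; Y \cup_{c_{y_0}} CL.
\]

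Finally, $Y \cup_{c_{y_0}} CL$ collapses all of $L \subseteq CL$ to the single point $y_0 \in Y$, so as a quotient it is $Y \vee (CL/L)$. The quotient $CL/L$ is the unreduced suspension of $L$, homotopy equivalent to $\Sigma L$. Hence $\Delta \simeq Y \vee \Sigma L = \dl_\Delta(x) \vee \Sigma \lk_\Delta(x)$, as required. The only nontrivial ingredient is the homotopy-invariance of the pushout under homotopic attaching maps when the attaching inclusion is a cofibration; this is standard and I would simply cite it (e.g., from Björner's survey) rather than reprove it.
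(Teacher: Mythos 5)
Your proof is correct. Note first that the paper does not actually prove Theorem~\ref{T1}: it is stated among results ``well-known in Combinatorial Topology'' and cited to Hatcher, Bj\"orner, and Jonsson without argument. That said, your argument is exactly the one the paper deploys (in a slightly more general setting) to prove Theorem~\ref{T1*}: decompose the complex as a union of two subcomplexes meeting in the relevant link, observe that the attaching map is the inclusion of that intersection, replace it by a constant map using Theorem~\ref{T4} (your cofibration/homotopy-invariance step), and identify $CL/L$ with the suspension $\Sigma L$ using the contractibility of the cone (the paper invokes Theorem~\ref{T3} for this). Concretely, your $\Delta = \dl_\Delta(x) \cup_\iota (\{x\}*\lk_\Delta(x))$ with $\dl_\Delta(x) \cap (\{x\}*\lk_\Delta(x)) = \lk_\Delta(x)$ is the specialization of the paper's decomposition $\Delta' = \Delta_0 \cup \Delta_1$ with $\Delta_1 = K * \lk_{\Delta'}(K)$ to the case $K = \{x\}$. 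So while you cannot be ``following the paper's proof'' (there is none for this statement), you have independently reconstructed the argument scheme the author uses for the closely related splitting Theorem~\ref{T1*}, and all the tools you invoke are already available in the paper as Theorems~\ref{T3} and~\ref{T4}. The only caveat worth noting, which both you and the paper leave implicit, is the degenerate case $\lk_\Delta(x) = \emptyset$, where there is no constant map $c_{y_0}\colon L \to Y$; that case ($x$ isolated) should be handled separately, though the conclusion $\Delta \simeq \dl_\Delta(x) \vee S^0$ still holds trivially.
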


\begin{thm}\label{T3}Let $\Delta$ be a contractible simplicial complex and let $\Gamma$ 
be a nonempty subcomplex of $\Delta$. Then the quotient complex $\Delta/ \Gamma$ is homotopy equivalent to the suspension $\Sigma(\Gamma)$. 
\end{thm}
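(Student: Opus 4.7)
The plan is to exhibit $\Delta/\Gamma$ as the mapping cone of the inclusion $\Gamma \hookrightarrow \Delta$ and then exploit the contractibility of $\Delta$ to identify this mapping cone with $\Sigma\Gamma$.

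First I would form the adjunction space $M := \Delta \cup_{\Gamma} Cone(\Gamma)$ obtained by gluing a cone over $\Gamma$ to $\Delta$ along $\Gamma \subseteq \Delta$. Since the inclusion of any subcomplex in a simplicial complex is a cofibration, both $\Gamma \hookrightarrow \Delta$ and the induced inclusion $Cone(\Gamma) \hookrightarrow M$ are cofibrations. Because $Cone(\Gamma)$ is contractible, collapsing it to a point in $M$ yields a homotopy equivalence
\begin{equation*}
M \simeq M/Cone(\Gamma) = \Delta/\Gamma.
\end{equation*}

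On the other hand, the inclusion $\Delta \hookrightarrow M$ is also a subcomplex inclusion, hence a cofibration, and by hypothesis $\Delta$ is contractible. Collapsing $\Delta$ therefore yields
\begin{equation*}
M \simeq M/\Delta = Cone(\Gamma)/\Gamma = \Sigma\Gamma.
\end{equation*}

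Chaining the two equivalences gives $\Delta/\Gamma \simeq \Sigma\Gamma$, as desired. The hypothesis that $\Gamma$ is nonempty is used to ensure that both the adjunction space $M$ is formed along a nonempty intersection and that $\Sigma\Gamma$ is interpreted in the usual way. The only subtle step is the justification that subcomplex inclusions are cofibrations, so that quotienting by a contractible subcomplex induces a homotopy equivalence; this is standard in combinatorial topology, and is precisely what allows the mapping-cone construction to be unwound from two directions to yield the stated equivalence.
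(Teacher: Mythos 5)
Your proof is correct. The paper does not actually supply an argument for this statement — it is listed among results declared ``well-known in Combinatorial Topology'' with references to Hatcher, Bj\"orner, and Jonsson — so there is no internal proof to compare against. Your mapping-cone argument is the standard one: form $M = \Delta \cup_{\Gamma} Cone(\Gamma)$, observe that both $Cone(\Gamma)$ and $\Delta$ sit inside $M$ as subcomplexes (hence as cofibrations), and collapse each contractible piece in turn to obtain $\Delta/\Gamma \simeq M \simeq Cone(\Gamma)/\Gamma = \Sigma\Gamma$. Each step is sound: the identifications $M/Cone(\Gamma) = \Delta/\Gamma$ and $M/\Delta = Cone(\Gamma)/\Gamma = \Sigma\Gamma$ are correct set-theoretic computations, and the homotopy equivalences $M \to M/Cone(\Gamma)$ and $M \to M/\Delta$ both follow from the standard fact that collapsing a contractible subcomplex of a CW complex is a homotopy equivalence. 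You also correctly flag that the nonemptiness of $\Gamma$ is what makes the gluing along $\Gamma$ and the suspension $\Sigma\Gamma$ behave as intended. This is exactly the sort of proof the cited references would give.
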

For spaces $X_0,X_1,A$ with $A\subseteq X_1$ and a map $f:A\rightarrow X_0$, the space $X_0$ with $X_1$ 
\emph{attached along A via f} is a quotient space obtained from the disjoint union of $X_0$ and $X_1$ 
by identifying each point $a\in A$ with its image $f(a)\in X_0$ and is denoted by $X_0\sqcup_fX_1$.

\begin{thm}\cite{AH}\label{T4} If $(X_1,A)$ is a CW pair and the two attaching maps $f,g:A\rightarrow X_0$ are homotopic, 
then $X_0\sqcup_fX_1\simeq X_0\sqcup_gX_1$.
\end{thm}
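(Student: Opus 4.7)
The plan is to construct an intermediate space that contains both adjunction spaces as deformation retracts, thereby exhibiting them as homotopy equivalent. Let $H\colon A\times I\to X_0$ be a homotopy with $H_0=f$ and $H_1=g$. Form the ``telescope'' space
\[
Z := X_0 \,\sqcup_H\, (X_1\times I),
\]
where every point $(a,t)\in A\times I\subseteq X_1\times I$ is identified with $H(a,t)\in X_0$. Then $Z$ contains $X_0\sqcup_f X_1$ as the image of the slice $X_1\times\{0\}$ (glued to $X_0$ via $f=H_0$), and $X_0\sqcup_g X_1$ as the image of $X_1\times\{1\}$ (glued via $g=H_1$). The goal is to show both inclusions are homotopy equivalences.

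The main step is to produce, for each endpoint, a strong deformation retraction of $Z$ onto the corresponding slice. Because $(X_1,A)$ is a CW pair, the product pair $(X_1\times I,\,(X_1\times\{0\})\cup (A\times I))$ enjoys the homotopy extension property, and in fact $(X_1\times\{0\})\cup (A\times I)$ is a strong deformation retract of $X_1\times I$ via a retraction $r_0$ that fixes $A\times I$ pointwise. Since $r_0$ is the identity on the gluing locus $A\times I$, it descends to a well-defined deformation retraction of $Z$ onto $X_0\sqcup_f X_1$ (extending by the identity on the $X_0$ summand). The symmetric construction at the $t=1$ end produces a deformation retraction of $Z$ onto $X_0\sqcup_g X_1$. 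Composing, one concludes $X_0\sqcup_f X_1\simeq Z\simeq X_0\sqcup_g X_1$.

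The principal obstacle is the careful bookkeeping to ensure that the chosen deformation retraction of $X_1\times I$ simultaneously (i) fixes $A\times I$ pointwise so that it passes to the quotient $Z$, and (ii) interacts correctly with the identity on $X_0$ so that the two partial retractions assemble into a continuous map on $Z$. This is precisely what the product form of the homotopy extension property for CW pairs furnishes, so the argument ultimately reduces to a single standard application of HEP in an adjunction-space setting, together with verifying compatibility with the quotient.
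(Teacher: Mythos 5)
Your proposal is correct and is essentially Hatcher's own proof of this result (Proposition 0.18 in \emph{Algebraic Topology}), which is exactly what the paper cites without reproducing: build the telescope $X_0\sqcup_H(X_1\times I)$ and deformation retract it onto each end using that $(X_1\times\{0\})\cup(A\times I)$ is a deformation retract of $X_1\times I$ for a CW pair, the retraction fixing $A\times I$ so that it descends to the quotient. Since the paper gives no proof of its own, there is nothing further to compare.
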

The following theorem is a special instance of the Bj\"{o}rner's generalized homotopy complementation formula.

\begin{thm}\cite{Bjo}\label{T2} Let $\Delta$ be a simplicial complex on $V$ and assume that there exists a subset 
$A\subseteq V$ such that $\dim(\Delta[A])=0$ and 
$\Delta[V\setminus A]$ is contractible, then we have the homotopy equivalence 
$\Delta\simeq \vee _{x\in A} \Sigma(\lk_\Delta(x))$.
\end{thm}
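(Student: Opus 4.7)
The plan is to prove the formula by collapsing the contractible subcomplex $\Delta[V\setminus A]$ and recognizing the quotient as a wedge of suspensions of links. Write $B=V\setminus A$. First I would observe that the hypothesis $\dim(\Delta[A])=0$ forces every face $\sigma\in\Delta$ to satisfy $|\sigma\cap A|\le 1$, since no two distinct vertices of $A$ form an edge of $\Delta$. Consequently, for each $x\in A$, the link $\lk_\Delta(x)$ is contained in $\Delta[B]$: any $\tau\in\lk_\Delta(x)$ containing a vertex $y\in A\setminus\{x\}$ would produce $\{x,y\}\in\Delta[A]$, contradicting the dimension hypothesis.

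Next I would use this to decompose
\[
\Delta \;=\; \Delta[B]\;\cup\;\bigcup_{x\in A}\operatorname{st}_\Delta(x),
\]
where $\operatorname{st}_\Delta(x)=\{x\}*\lk_\Delta(x)$ is the closed star, which is a cone and hence contractible. For distinct $x,y\in A$ one has $\operatorname{st}_\Delta(x)\cap\operatorname{st}_\Delta(y)\subseteq\Delta[B]$, because any common face that contained $x$ and $y$ would include the non-edge $\{x,y\}$; and $\operatorname{st}_\Delta(x)\cap\Delta[B]=\lk_\Delta(x)$ by the previous step.

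Since $\Delta[B]$ is a contractible CW subcomplex of $\Delta$, the quotient map $\Delta\to\Delta/\Delta[B]$ is a homotopy equivalence. In the quotient the images of the stars $\operatorname{st}_\Delta(x)$, $x\in A$, pairwise meet only at the collapsed basepoint, so
\[
\Delta/\Delta[B]\;\cong\;\bigvee_{x\in A}\bigl(\operatorname{st}_\Delta(x)/\lk_\Delta(x)\bigr).
\]
Applying Theorem~\ref{T3} to the contractible complex $\operatorname{st}_\Delta(x)$ and its nonempty subcomplex $\lk_\Delta(x)$ yields $\operatorname{st}_\Delta(x)/\lk_\Delta(x)\simeq\Sigma\lk_\Delta(x)$, and assembling these gives $\Delta\simeq\bigvee_{x\in A}\Sigma(\lk_\Delta(x))$.

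The main obstacle is the wedge-decomposition step: one has to verify rigorously that after collapsing $\Delta[B]$ the stars are glued only at the basepoint, which reduces to the set-theoretic observation above but must be matched with the CW structure on $\Delta/\Delta[B]$. Degenerate cases also require a brief sanity check, most notably a vertex $x\in A$ with $\lk_\Delta(x)=\emptyset$: here $\operatorname{st}_\Delta(x)=\{x\}$ is a connected component of $\Delta$ disjoint from $\Delta[B]$, and one must adopt the convention $\Sigma(\emptyset)=S^0$ so that the wedge formula remains correct.
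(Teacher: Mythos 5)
The paper does not supply its own proof of this statement: it is quoted as a special instance of Bj\"orner's generalized homotopy complementation formula and cited directly from \cite{Bjo}, so there is no in-paper argument to compare against. Your proof is a correct, self-contained derivation of this special case. The key observations --- that $\dim(\Delta[A])=0$ forces $|\sigma\cap A|\le 1$ for every face $\sigma$, hence $\lk_\Delta(x)\subseteq\Delta[V\setminus A]$ for each $x\in A$, that $\Delta=\Delta[V\setminus A]\cup\bigcup_{x\in A}\operatorname{st}_\Delta(x)$ with stars meeting pairwise only inside $\Delta[V\setminus A]$ and $\operatorname{st}_\Delta(x)\cap\Delta[V\setminus A]=\lk_\Delta(x)$ --- are all verified correctly, and collapsing the contractible subcomplex $\Delta[V\setminus A]$ then yields the wedge $\bigvee_{x\in A}\operatorname{st}_\Delta(x)/\lk_\Delta(x)\simeq\bigvee_{x\in A}\Sigma\lk_\Delta(x)$ via the paper's Theorem~\ref{T3}. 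You were also right to flag the degenerate case $\lk_\Delta(x)=\{\emptyset\}$ (an isolated vertex $x\in A$), where the convention $\Sigma(\emptyset)=S^0$ is what makes the formula survive; this is a genuine subtlety and worth the sentence you spent on it. Compared to Bj\"orner's original treatment, which proceeds via a general complementation formula for arbitrary $\Delta[A]$ using nerve/homotopy-colimit machinery, your argument is more elementary precisely because the hypothesis $\dim(\Delta[A])=0$ collapses the combinatorics so severely that a direct cut-and-paste with the quotient-by-contractible-subcomplex lemma suffices; what you lose is generality, what you gain is that everything reduces to Theorems~\ref{T3} and the standard CW-pair collapse.
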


\subsection{Discrete Morse theory}
Discrete Morse theory was introduced by R. Forman in \cite{F}, and now it is counted as one of the most powerful techniques in topological combinatorics. 
The existence of a Morse matching on a simplicial complex $\Delta$ enables us 
to perform collapses for every matched pair in the matching, and therefore a new $CW$ 
complex is formed having the same homotopy type with the initial simplicial complex $\Delta$.

To every simplicial complex $\Delta$, one can associate a poset $P(\Delta)$ called the 
\emph{face poset} of $\Delta$, which is the set of faces of $\Delta$ ordered by inclusion.
Now consider the Hasse diagram of the face poset $P(\Delta)$ which is a directed graph with 
edges pointing down from large to small elements. A set $M$ of pairwise disjoint edges of 
this graph is called a \emph{matching} of $P(\Delta)$. So a matching $M$ corresponds exactly 
to a pairing of faces of $\Delta$ such that each face appears at most once. A matching $M$ is 
\emph{perfect} if it covers all elements of the face poset $P(\Delta)$.

When we have the Hasse diagram of the poset $P(\Delta)$ and a matching $M$, a modified Hasse 
diagram can be constructed by reversing the direction of the edges contained in $M$. 
A matching $M$ is said to be \emph{Morse}, if the modified Hasse diagram is acyclic. 
The main theorem of discrete Morse theory can now be stated as follows.

\begin{thm}\cite{Fo} Let $\Delta$ be a simplicial complex with a Morse matching $M$. Assume that for each $i\geq 0$, 
there are $c_j$ unmatched $i$-dimensional simplices. Then, $\Delta$ is homotopy equivalent to a $CW$ 
complex with exactly $c_j$ cells of each positive dimension $i$ and $c_0+1$ cells of dimension 0.
\end{thm}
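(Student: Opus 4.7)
The approach I would take is to apply the matching tree construction from Section~4 to the devoid complex $\DE(C_n;P_k)$. Labeling the vertices of $C_n$ cyclically as $v_0, v_1, \ldots, v_{n-1}$, I would initialize the matching tree by first branching on whether a chosen base vertex $v_0$ belongs to the face or not. This breaks the cyclic symmetry of $C_n$ and reduces each branch to a problem that behaves like a devoid complex on a path-like arc, which should be tractable by processing vertices in cyclic order and pairing faces according to the generalized matching tree rules introduced earlier in the paper.

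Writing $n = (k+1)t + r$ with $0 \le r \le k$, the four cases of the theorem correspond exactly to $r = 0$, $r = 1$, $2 \le r \le k-1$, and $r = k$. At each node of the matching tree I would apply the rule that pairs a face $F$ with $F \cup \{w\}$ whenever the insertion or deletion of $w$ does not violate $P_k$-freeness in a controlled way. The surviving critical cells should correspond to maximal $P_k$-free subsets of $V(C_n)$ whose induced subgraph is a disjoint union of paths, each on at most $k-1$ vertices, arranged around the cycle so that the gaps between consecutive blocks are as small as possible. In the fully divisible case $r=0$ there are exactly $k$ cyclic phases of such a maximal pattern that survive the initial branching on $v_0$, each being a face of size $t(k-1)$ and hence a critical cell of dimension $t(k-1)-1$; in the remaining cases the residual $r$ vertices pin down a single critical cell whose dimension is forced by the formulas in the statement.

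Applying Forman's discrete Morse theorem, $\DE(C_n;P_k)$ is homotopy equivalent to a CW complex with one $0$-cell together with the listed critical cells, all of which (in each case) lie in a common positive dimension $d$. Because the $0$-skeleton is a point and there are no cells of intermediate dimension, the attaching maps of the $d$-cells factor through a contractible space and are therefore null-homotopic, so the resulting CW complex collapses to a wedge of the appropriate number of copies of $S^d$, matching the homotopy equivalence claimed in each case.

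The main obstacle will be designing the matching tree rules precisely enough to certify exactly the stated count of critical cells in every residue class, in particular confirming that no stray critical cells appear in unexpected dimensions once the cyclic adjacency between $v_0$ and $v_{n-1}$ is reintroduced after the initial symmetry-breaking. If this direct matching-tree bookkeeping becomes unwieldy, a back-up strategy would be to first settle the homotopy type of $\DE(P_n;P_k)$ and then apply one of the splitting equivalences from Section~3 at a well-chosen vertex of $C_n$, reducing the problem to the path case together with a link that is itself a devoid complex of a shorter path.
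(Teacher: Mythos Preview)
Your proposal does not address the stated theorem at all. The statement you were asked to prove is Forman's main theorem of discrete Morse theory: that any simplicial complex equipped with a Morse matching is homotopy equivalent to a CW complex whose cells are indexed by the unmatched faces. This is a general foundational result, cited in the paper from \cite{Fo} and stated there without proof; it has nothing to do with the particular complex $\DE(C_n;P_k)$.

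What you have sketched instead is an approach to Theorem~\ref{thm1}, the computation of the homotopy type of $\DE(C_n;P_k)$. That is a different statement entirely. Your outline for \emph{that} theorem is in the right spirit and broadly parallels the paper's actual argument (break the cyclic symmetry, reduce to path-like pieces via the matching tree, and count critical cells by residue class), but none of it constitutes a proof of the Forman theorem. A proof of the Forman theorem would need to argue, for an arbitrary simplicial complex with an arbitrary acyclic matching, that one can collapse matched pairs to obtain the claimed CW structure; this requires the machinery of discrete Morse theory (gradient paths, elementary collapses or a suitable induction on matched pairs), not a case analysis on a specific family of complexes.
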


\begin{cor}Let $\Delta$ be a simplicial complex with a Morse matching $M$ such that $c_j=0$ for all 
but one $i$. Then for this particular $i$, we have $\Delta\simeq \vee_{c_i}S^{i}$.
\end{cor}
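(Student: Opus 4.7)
The plan is to read the conclusion off directly from the main theorem of discrete Morse theory stated just above. That theorem produces a CW complex $X$ with $X\simeq\Delta$ whose cells number $c_j$ in each positive dimension $j$ and $c_0+1$ in dimension $0$. Under the hypothesis that $c_j=0$ for every $j$ except a single index $i$, the cell structure of $X$ becomes especially rigid, and the argument splits according to whether $i=0$ or $i\geq 1$.

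If $i=0$, then $X$ consists only of $0$-cells, totalling $c_0+1$ points. A disjoint union of $c_0+1$ points is exactly the wedge $\vee_{c_0} S^0$ based at one of the points, so $\Delta\simeq \vee_{c_0} S^0$, as required. If instead $i\geq 1$, the hypothesis forces $c_0=0$, hence $X$ has a single $0$-cell $v$; since $c_1=\cdots=c_{i-1}=0$ as well, the entire $(i-1)$-skeleton of $X$ coincides with $\{v\}$. The attaching map of any $i$-cell is therefore a map $S^{i-1}\to\{v\}$, which is necessarily the constant map. Each $i$-cell attached to a point along a constant map contributes an $i$-sphere, and the resulting space is the one-point union of $c_i$ such spheres at $v$, giving $\Delta\simeq \vee_{c_i} S^i$.

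There is essentially no obstacle here: the corollary is a routine unpacking of the preceding theorem together with the elementary observation that an $i$-cell attached via a null-homotopic boundary contributes an $S^i$ wedge summand. The only mild subtlety is bookkeeping the $+1$ in the zero-dimensional cell count, which is absorbed by the fact that a wedge of $c_0$ copies of $S^0$ consists of exactly $c_0+1$ points, so the two cases can be stated uniformly.
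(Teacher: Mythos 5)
The paper states this corollary without proof, treating it as a routine consequence of Forman's theorem. Your argument is correct and is exactly the standard one a reader is expected to supply: the CW model has trivial $(i-1)$-skeleton (a single $0$-cell when $i\geq 1$), so all $i$-cells are attached by constant maps, yielding a wedge of $c_i$ copies of $S^i$; the $i=0$ case is handled by the bookkeeping observation that a wedge of $c_0$ copies of $S^0$ has $c_0+1$ points, matching the $c_0+1$ vertices of the Morse CW complex.
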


The following lemma is due to Jonsson \cite{Jo} that allows us to combine acyclic matchings on 
families of subsets of a finite set to form a larger acyclic matching.

\begin{lem}\label{L1}Let $\Delta_0$ and $\Delta_1$ be disjoint families of subsets of a finite 
set such that $\tau\nsubseteq \sigma$ if $\sigma\in \Delta_0$ and $\tau\in \Delta_1 $. If $\ME_i$ 
is an acyclic matching on $\Delta_i$ for $i=0,1$, then $\ME_0\cup \ME_1$ is an acyclic matching on $\Delta_0\cup\Delta_1$.
\end{lem}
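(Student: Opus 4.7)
The plan is to verify the two requirements of an acyclic matching on $\Delta_0 \cup \Delta_1$: that $\ME_0 \cup \ME_1$ is indeed a matching in the Hasse diagram of $P(\Delta_0 \cup \Delta_1)$, and that reversing its edges yields an acyclic directed graph.

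For the matching condition, since $\Delta_0$ and $\Delta_1$ are disjoint and each $\ME_i$ pairs only elements within $\Delta_i$, no face can appear in both matchings, so $\ME_0 \cup \ME_1$ is a set of pairwise disjoint edges. Moreover, any pair in $\ME_i$ is a covering pair in $P(\Delta_i)$ (two faces differing by one vertex), and this remains a covering pair in $P(\Delta_0 \cup \Delta_1)$ since the Hasse diagram only records dimensional adjacencies.

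The heart of the argument is the acyclicity of the modified Hasse diagram. The key observation is that the hypothesis $\tau \nsubseteq \sigma$ for $\sigma \in \Delta_0$, $\tau \in \Delta_1$ forbids any directed edge from $\Delta_0$ to $\Delta_1$. Indeed, such an edge would be either (i) an unreversed down-edge, which would require $\tau \subset \sigma$ with $|\sigma| = |\tau|+1$, contradicting the hypothesis, or (ii) a reversed up-edge from a matched pair, which would place $\{\sigma, \tau\}$ in $\ME_0$ or $\ME_1$, contradicting the fact that each $\ME_i$ matches only elements of $\Delta_i$. Edges from $\Delta_1$ to $\Delta_0$ (down-edges corresponding to $\sigma \subset \tau$) are not restricted and may exist, but this one-way flow is exactly what prevents cycles.

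Now suppose, toward a contradiction, that the modified Hasse diagram of $P(\Delta_0 \cup \Delta_1)$ with matching $\ME_0 \cup \ME_1$ contains a directed cycle $C$. Since there is no edge from $\Delta_0$ to $\Delta_1$, the cycle $C$ cannot visit both parts; hence $C$ lies entirely within $\Delta_0$ or entirely within $\Delta_1$. In the former case, the edges of $C$ belong to the modified Hasse diagram of $P(\Delta_0)$ with matching $\ME_0$, contradicting the acyclicity of $\ME_0$; the latter case is symmetric. The main (minor) obstacle is to phrase the edge analysis cleanly, but once one recognizes that the hypothesis rules out precisely the edges from $\Delta_0$ to $\Delta_1$, the reduction to acyclicity of each $\ME_i$ is immediate.
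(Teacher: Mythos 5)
The paper does not prove this lemma itself but cites it to Jonsson, so there is no in-paper proof to compare against. Your argument is correct and is essentially the standard one: the containment hypothesis rules out all directed edges from $\Delta_0$ to $\Delta_1$ in the modified Hasse diagram (down-edges would violate $\tau\nsubseteq\sigma$, and up-edges would require a matched pair straddling the two families), so any directed cycle would be confined to one of the $\Delta_i$ and would contradict the acyclicity of $\ME_i$.
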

\subsection{Matching Trees For The Independence Complexes}
In this subsection we recall the definition of matching trees which were introduced by Bousquet-M\'elou, et al. in \cite{BLN} 
to construct a Morse matching on the independence complex $\IE(G)$ of a graph $G=(V,E)$. 

Let $\Sigma(A,B):=\{I\in \IE(G)\colon A\subseteq I\;\textrm{and}\;B\cap I=\emptyset\}$, where $A,B\subseteq V$ 
are two subsets such that $A\cap B=\emptyset$ and $N(A):=\cup_{a\in A}N(a)\subseteq B$.

The nodes of the matching tree represent sets of yet unmatched elements and they are of the form $\Sigma(A,B)$. The root of the matching tree is $I(\emptyset,\emptyset)=\IE(G)$, and other nodes are defined recursively as follows. If the node is the empty set, it is declared as a leaf. Otherwise, the node is of the form $\Sigma(A,B)$, which is a non-empty set. If $A\cup B=V$, this node also is declared as a leaf. Then the remaining nodes are of the form $\Sigma(A,B)$, with $A\cup B\neq V$. Pick a vertex $p$ in $V'=V\setminus (A\cup B)$ and proceed as follows:
\begin{itemize}
\item If $p$ has at most one neighbour in $V'$, define $\Delta(A,B,p)$ to be the subset of $\Sigma(A,B)$ formed of sets that do not intersect $N(p)$. 
So $\Delta(A,B,p)=\{I\in \IE(G)\colon A\subseteq I\;\textrm{and}\;B\cap I=I\cap N(p)=\emptyset\}$. Then  
$$M(A,B,p)=\{(I,I\cup \{p\})\colon I\in \Delta(A,B,p)\;\textrm{and}\;p\notin I\}$$ 
gives a perfect matching of $\Delta(A,B,p)$ and $p$ is called a \emph{pivot} of this matching. The node $\Sigma(A,B)$ has a unique child, namely the set 
$U=\Sigma(A,B)\setminus \Delta(A,B,p)$ of unmatched elements. If $p$ has no neighbour in $V'$, this set is empty. If $p$ has exactly one neighbour $v$ in $V'$, 
then $U=\Sigma(A\cup \{v\},B\cup N(v))$. The $3$-tuple $(A,B,p)$ is called a \emph{matching site} of the tree.
\item Otherwise, choose one neighbour $v$ of $p$ in $V'$. The node $\Sigma(A,B)$  has two children. 
The left child is $\Sigma(A,B\cup \{v\})$ and the right child is  $\Sigma(A\cup \{v\},B\cup N(v))$. Here, $(A,B,p)$ is called a \emph{splitting site} of the tree. 
\end{itemize}

\begin{thm}\cite{BLN} For any graph $G$ and any matching tree of $G$, the matching of $\IE(G)$ obtained by taking the union of all partial matchings 
$M(A,B,p)$ performed at the matching sites is Morse.
\end{thm}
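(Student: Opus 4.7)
The plan is to induct on the matching tree, using Jonsson's Lemma~\ref{L1} to splice together acyclic matchings across sibling subtrees. First I would verify that the tree yields a genuine partition: by induction on depth, the node set $\Sigma(A,B)$ at a splitting site decomposes as $\Sigma(A, B\cup\{v\}) \sqcup \Sigma(A\cup\{v\}, B\cup N(v))$ according to whether $v\in I$, while at a matching site $\Sigma(A,B) = \Delta(A,B,p) \sqcup \bigl(\Sigma(A,B)\setminus \Delta(A,B,p)\bigr)$, and the remainder is either empty (when $p$ has no $V'$-neighbour) or equal to $\Sigma(A\cup\{v\}, B\cup N(v))$ (when $v$ is the unique such neighbour). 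This identifies the leaves with the unmatched faces and each $\Delta(A,B,p)$ with the block consumed by the local matching.

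Next I would check that each local matching $M(A,B,p)$ is itself acyclic on $\Delta(A,B,p)$: every matched pair differs only in the single element $p$, so a hypothetical directed cycle in the modified Hasse diagram would have to reintroduce $p$ after removing it, but the only arrows that insert $p$ are the reversed matched edges, while interior Hasse arrows strictly remove a vertex; hence no cycle can close.

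The induction step then applies Lemma~\ref{L1} in two guises. At a splitting site, take $\Delta_0 := \Sigma(A, B\cup\{v\})$ and $\Delta_1 := \Sigma(A\cup\{v\}, B\cup N(v))$; every $\tau\in \Delta_1$ contains $v$ and every $\sigma\in\Delta_0$ excludes $v$, so $\tau\not\subseteq\sigma$, and the inductively acyclic matchings on the two subtrees combine into an acyclic matching on $\Sigma(A,B)$. At a matching site, set $\Delta_0 := \Delta(A,B,p)$ carrying the local matching $M(A,B,p)$ and $\Delta_1 := \Sigma(A\cup\{v\},B\cup N(v))$ carrying the inductive matching of the unique child. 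Every $\sigma\in\Delta_0$ satisfies $\sigma\cap N(p)=\emptyset$ and in particular omits $v\in N(p)$, while every $\tau\in\Delta_1$ contains $v$; so again $\tau\not\subseteq\sigma$, and Lemma~\ref{L1} glues the two acyclic matchings.

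The main obstacle is the bookkeeping at the matching site: one has to verify both that $M(A,B,p)$ is a perfect matching on $\Delta(A,B,p)$ (using that $I\cup\{p\}$ is again independent because $I\cap N(p)=\emptyset$ and $p\notin N(p)$), and that the leftover $\Sigma(A,B)\setminus \Delta(A,B,p)$ really coincides with $\Sigma(A\cup\{v\}, B\cup N(v))$ in the one-neighbour subcase. Once these identifications are in hand, the hypothesis of Lemma~\ref{L1} at each step is immediate from the ``contains $v$ vs.\ excludes $v$'' dichotomy, and a single bottom-up induction on the matching tree establishes the Morse property of the union.
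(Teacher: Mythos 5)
Your proposal is correct and matches the paper's proof of the generalized theorem (for arbitrary simplicial complexes) specialized to the independence-complex case: bottom-up induction on the matching tree, the observation that each $M(A,B,p)$ is acyclic because the only upward arrows insert the pivot $p$, and Jonsson's Lemma~\ref{L1} applied at both matching and splitting sites with the same ``contains $v$ vs.\ excludes $v$'' check to get $\tau\nsubseteq\sigma$. No gaps; this is the same argument.
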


\section{Some Reduction Techniques}\label{section:reduct}
In this section, we introduce some homotopy reduction techniques in greater generality, and provide some applications.

Let $\Delta$ be a simplicial complex on $V$. For a given subset $A\subseteq V$, we define, 
$$\DE(A):=\{H\in \Delta\colon H\cup A\notin \Delta\}.$$ 
In particular, if $A$ has only one element $p$, we simply write $\DE(p)$ instead of $\DE(\{p\})$, and set $\DE[v]:=\DE(v)\cup \{\{v\}\}$. 

\begin{thm}\label{T5}Let $\Delta$ be a simplicial complex. If $u$ and $v$ are vertices of $\Delta$ such that $\DE(u)\subseteq \DE(v)$, then $\Delta$ is homotopy equivalent to $\dl_\Delta(v)$.
\end{thm}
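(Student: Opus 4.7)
The plan is to deduce the statement from Theorem~\ref{T1} by showing that $\lk_\Delta(v)$ is a cone with apex $u$ (and thus contractible, in particular contractible in $\dl_\Delta(v)$); the suspension of a contractible space then drops out of the wedge decomposition, yielding $\Delta\simeq\dl_\Delta(v)$.

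First I would translate the hypothesis. Since $H\in\DE(p)$ means $H\in\Delta$ together with $H\cup\{p\}\notin\Delta$, the inclusion $\DE(u)\subseteq\DE(v)$ is equivalent, by contraposition, to the implication
\[
H\in\Delta\ \text{and}\ H\cup\{v\}\in\Delta \;\Longrightarrow\; H\cup\{u\}\in\Delta.
\]
(One may clearly assume $u\neq v$, otherwise the conclusion is vacuous.) Now fix $\tau\in\lk_\Delta(v)$, so $v\notin\tau$ and $\tau\cup\{v\}\in\Delta$. The key move is to apply the implication above not to $H=\tau$ but to $H:=\tau\cup\{v\}$: since $v\in H$, one has $H\cup\{v\}=H\in\Delta$, and the implication delivers $\tau\cup\{u,v\}=H\cup\{u\}\in\Delta$. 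This says precisely that $\tau\cup\{u\}\in\lk_\Delta(v)$. Specializing $\tau=\emptyset$ shows $u$ itself is a vertex of $\lk_\Delta(v)$, so $\lk_\Delta(v)$ is a cone with apex $u$.

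With the cone structure in hand, $\lk_\Delta(v)$ is contractible, and because $u\neq v$ the cone sits entirely inside $\dl_\Delta(v)$, so the link is contractible \emph{in} $\dl_\Delta(v)$ as well. Theorem~\ref{T1} then provides $\Delta\simeq\dl_\Delta(v)\vee\Sigma\lk_\Delta(v)\simeq\dl_\Delta(v)$, as needed. The only subtle point is the choice $H=\tau\cup\{v\}$ in the application of the hypothesis; a naive choice $H=\tau$ only yields $\tau\cup\{u\}\in\Delta$, which is strictly weaker than what is required to cone off the link.
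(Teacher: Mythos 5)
Your argument is correct and takes essentially the same route as the paper: show that $\lk_\Delta(v)$ is a cone with apex $u$ and then invoke Theorem~\ref{T1}, the suspension of the contractible link dropping out of the wedge. The paper asserts the cone property in a single line (via the identity $\lk_{\Delta}(v)=\Delta\setminus \DE[v]$), whereas you verify it directly by applying the contrapositive of $\DE(u)\subseteq\DE(v)$ to $H=\tau\cup\{v\}$; this is the same reasoning with the key step made explicit.
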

\begin{proof}Suppose that $u$ and $v$ are vertices of $\Delta$ satisfying $\DE(u)\subseteq \DE(v)$. However, this containment implies that $\lk_{\Delta}(v)=\Delta\setminus \DE[v]$ that
in turn forces $\lk_{\Delta}(v)$ to be a cone with apex $u$. The theorem now follows from Theorem \ref{T1}.
\end{proof}

\begin{exmp}Consider the simplicial complex $\Delta$ depicted in Figure \ref{Fig-1}. Note that we have $\DE(1)=\{\{3,4\}\}$ and $\DE(2)=\{\{4\},\{1,4\},\{3,4\}\}$ 
so that $\DE(1)\subseteq \DE(2)$ holds. We therefore conclude that $\Delta\simeq \dl_{\Delta}(2)$.
\begin{figure}[ht]
\begin{center}
\psfrag{G}{$\Delta$}\psfrag{H}{$\simeq$}
\psfrag{1}{$1$}\psfrag{2}{$2$}\psfrag{3}{$3$}\psfrag{4}{$4$}
\includegraphics[width=3in,height=1.5in]{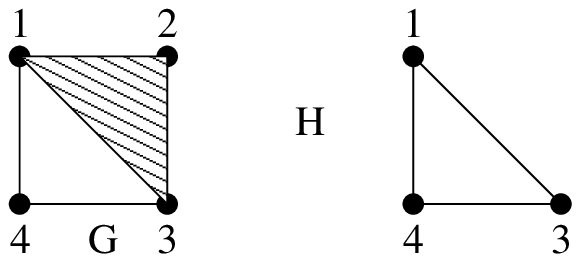}
\end{center}
\caption{}
\label{Fig-1}
\end{figure}
\end{exmp}
\begin{thm}\label{thm:split}
Let $\Delta$ be a simplicial complex. If $u$, $v$ are vertices of $\Delta$ such that $\DE[u]\subseteq \DE[v]$, then the following homotopy equivalence
\begin{equation*}
\Delta\simeq \dl_\Delta(v)\vee\Sigma \lk_\Delta(v)
\end{equation*}
holds.
\end{thm}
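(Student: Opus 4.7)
The plan is to deduce the conclusion directly from Theorem \ref{T1} applied to the vertex $v$. For that, I need to show that $\lk_\Delta(v)$ is contractible inside $\dl_\Delta(v)$, and the natural way is to exhibit an intermediate contractible subcomplex $W$ with $\lk_\Delta(v) \subseteq W \subseteq \dl_\Delta(v)$.

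First I would record a small consequence of the hypothesis: since $\{u\}\in\DE[u]\subseteq \DE[v]=\DE(v)\cup\{\{v\}\}$, the vertex $u$ must differ from $v$ (otherwise the statement is vacuous in the sense that $\DE[u]=\DE[v]$ trivially) and in fact $\{u,v\}\notin\Delta$. Then I would define
\[
W := \{\tau\in \dl_\Delta(v) : \tau\cup\{u\}\in\Delta\}.
\]
A routine check shows that $W$ is closed under taking subsets, hence a subcomplex of $\dl_\Delta(v)$. Because $u\neq v$, for every $\tau\in W$ the set $\tau\cup\{u\}$ still avoids $v$ and lies in $\Delta$, so $\tau\cup\{u\}\in W$. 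Thus $W$ is a cone with apex $u$, and in particular contractible.

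The heart of the argument is to verify that $\lk_\Delta(v)\subseteq W$. Given $\tau\in\lk_\Delta(v)$, we have $v\notin\tau$ and $\tau\cup\{v\}\in\Delta$. Suppose toward a contradiction that $\tau\cup\{u\}\notin\Delta$, so $\tau\in\DE(u)\subseteq\DE[u]\subseteq\DE[v]=\DE(v)\cup\{\{v\}\}$. Since $v\notin\tau$ we have $\tau\neq\{v\}$, forcing $\tau\in\DE(v)$, i.e.\ $\tau\cup\{v\}\notin\Delta$, contradicting the assumption that $\tau\in\lk_\Delta(v)$. Hence $\tau\in W$ as required.

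Combining these steps, the inclusion $\lk_\Delta(v)\hookrightarrow \dl_\Delta(v)$ factors through the contractible subcomplex $W$ and is therefore null-homotopic, so $\lk_\Delta(v)$ is contractible in $\dl_\Delta(v)$. Theorem \ref{T1} then yields $\Delta\simeq \dl_\Delta(v)\vee \Sigma \lk_\Delta(v)$. I do not expect a serious obstacle: the only subtlety is ensuring $W$ is well defined as a cone with apex $u$, which is precisely where the enlarged hypothesis $\DE[u]\subseteq\DE[v]$ (as opposed to $\DE(u)\subseteq \DE(v)$ used in Theorem \ref{T5}) is needed, since it guarantees $u\neq v$ and hence keeps $\tau\cup\{u\}$ inside $\dl_\Delta(v)$.
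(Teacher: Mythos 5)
Your proof is correct and follows essentially the same strategy as the paper: both exhibit a contractible cone with apex $u$ sitting between $\lk_\Delta(v)$ and $\dl_\Delta(v)$, then invoke Theorem~\ref{T1}. Your intermediate complex $W$ is marginally larger than the paper's choice $C=\lk_\Delta(v)*u$, and you are a bit more explicit about the implicit assumption $u\neq v$, but these are inessential variations.
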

\begin{proof} We first prove that there is a cone $C$ such that $\lk_\Delta(v)\subseteq C\subseteq \dl_\Delta(v)$ so that $\lk_\Delta(v)$ is contractible in $\dl_\Delta(v)$. 
Assume that $A\in \lk_\Delta(v)$. It follows that $A\notin D[v]$. However, the assumption $\DE[u]\subseteq\DE[v]$ implies that $A\notin D[u]$ as well. Therefore, we have 
that $A\cup\{u\}$ is a face of $\Delta$ so that $C=\lk_\Delta(v)*u$ is the desired cone. Now for the second part, we need to show that the inclusion map 
$i\colon \lk_\Delta(v)\rightarrow \dl_\Delta(v)$ is null-homotopic. The map $H:\lk_\Delta(v)\times [0,1]\rightarrow \dl_\Delta(v)$ defined by $H(x,t)=(1-t)x+tu$ 
gives the desired homotopy equivalence between the maps $i$ and the constant map $c: \lk_\Delta(v)\rightarrow \dl_\Delta(v)$ with $c(x)=u$. 
The result now follows from Theorem \ref{T1}.
\end{proof}

We next consider the homotopical effect of removing a non-face from a simplicial complex, and compare the homotopy type of the resulting complex with that of 
the initial complex. For a simplicial complex $\Delta$ on $V$ and a minimal non-face $K$ of $\Delta$, we define
 $\FE^\Delta_K:=\{F\subseteq V\colon K\subseteq F\;\textrm{such that}\;\textrm{if}\;\ K\neq H\subsetneq F\;\textrm{then}\;\ H\in \Delta\}$.

\begin{thm}\label{T1*}Let $\Delta$ be a simplicial complex. If $K$ is a minimal non-face of $\Delta$ such that $\Delta[K]*\lk_{\Delta'}(K)$ is contractible 
in $\Delta$, then we have the following homotopy equivalence:
\begin{equation*}
\Delta'\simeq \Delta\vee\Sigma(S^{|K|-2}*\lk_{\Delta'}(K)),
\end{equation*}
where $\Delta'=\Delta\cup \{K\}\cup \KE$ and $\KE\subseteq \FE^\Delta_K$.
\end{thm}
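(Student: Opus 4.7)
The plan is to realize $\Delta'$ as a union $\Delta\cup X$ where $X$ is a contractible subcomplex whose intersection with $\Delta$ is exactly $\Delta[K]*\lk_{\Delta'}(K)$, and then invoke Theorems~\ref{T3} and \ref{T4} to convert the null-homotopy hypothesis into the desired wedge decomposition.

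First, I would set $X:=\overline{K}*\lk_{\Delta'}(K)$, where $\overline{K}$ denotes the full simplex on the vertex set $K$. A quick check shows that $X$ is a subcomplex of $\Delta'$: for every $\sigma\subseteq K$ and every $\rho\in\lk_{\Delta'}(K)$, the set $\sigma\cup\rho$ is contained in $K\cup\rho\in\Delta'$, hence lies in $\Delta'$. Being a join with a simplex, $X$ is a cone and therefore contractible. Moreover, every face of $\Delta'\setminus\Delta$ (namely $K$ itself and each element of $\KE$) contains $K$ and therefore belongs to $X$, so $\Delta'=\Delta\cup X$.

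Next, I would compute the intersection $\Delta\cap X$. A typical face of $X$ has the form $\sigma\cup\rho$ with $\sigma\subseteq K$ and $\rho\in\lk_{\Delta'}(K)$. If $\sigma=K$ then $\sigma\cup\rho\supseteq K$ lies outside $\Delta$; otherwise $\sigma\subsetneq K$ and applying the defining property of $\FE^\Delta_K$ to $K\cup\rho\in\{K\}\cup\KE$ forces $\sigma\cup\rho$, being a proper subset of $K\cup\rho$ distinct from $K$, to lie in $\Delta$. Hence
\[
\Delta\cap X=\partial\overline{K}*\lk_{\Delta'}(K)=\Delta[K]*\lk_{\Delta'}(K),
\]
the final equality using minimality of $K$ as a non-face, so that $\Delta[K]$ is precisely the boundary of $\overline{K}$, homeomorphic to $S^{|K|-2}$.

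By assumption, the inclusion $\iota:\Delta\cap X\hookrightarrow\Delta$ is homotopic to a constant map $c:\Delta\cap X\to\Delta$. Since $(X,\Delta\cap X)$ is a CW pair, Theorem~\ref{T4} yields $\Delta'=\Delta\sqcup_{\iota}X\simeq\Delta\sqcup_{c}X$. Attaching $X$ to $\Delta$ along $c$ identifies all of $\Delta\cap X\subseteq X$ with a single point of $\Delta$, so the resulting space is the wedge $\Delta\vee\bigl(X/(\Delta\cap X)\bigr)$. Because $X$ is contractible and $\Delta\cap X$ is a nonempty subcomplex, Theorem~\ref{T3} gives $X/(\Delta\cap X)\simeq\Sigma(\Delta\cap X)$, and assembling these equivalences completes the proof. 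The main obstacle I anticipate is the bookkeeping in the intersection computation: one must exploit the defining condition of $\FE^\Delta_K$ precisely enough to rule out any face of $X$ slipping into $\Delta$ above the expected boundary, while simultaneously verifying that every face of that boundary genuinely lies in $\Delta$. Once that set-theoretic identity is in place, the topology reduces to a textbook attaching-map argument.
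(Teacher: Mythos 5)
Your proposal is correct and follows essentially the same route as the paper: decompose $\Delta'=\Delta\cup\bigl(\overline K*\lk_{\Delta'}(K)\bigr)$, note the second piece is a contractible cone with intersection $\Delta[K]*\lk_{\Delta'}(K)$, swap the inclusion for a constant attaching map via Theorem~\ref{T4}, and then apply Theorem~\ref{T3} to the quotient. The only difference is that you spell out the verification that $\Delta\cap X=\Delta[K]*\lk_{\Delta'}(K)$ using the defining property of $\FE^\Delta_K$, which the paper merely asserts; this is a welcome addition rather than a deviation.
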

\begin{proof} We write $\Delta'$ as a union of two subcomplexes $\Delta_0=\Delta$ and $\Delta_1=K*\lk_{\Delta'}(K)$. Note that $\Delta_0\cap \Delta_1=\Delta[K]*\lk_{\Delta'}(K)$ 
and $\Delta_1$ is contractible. Now let $i:\Delta_0\cap \Delta_1\rightarrow \Delta_0$ be the identity embedding and let $c:\Delta_0\cap \Delta_1\rightarrow \Delta_0$ be any 
constant map. Since $\Delta[K]*\lk_{\Delta'}(K)$ is contractible in $\Delta_0$, the maps $i$ and $c$ are homotopic. It then follows that
$\Delta_0\cup_i\Delta_1\simeq \Delta_0\cup_c\Delta_1$ by Theorem \ref{T4}. On the other hand, we have
$\Delta_0\cup_c\Delta_1\simeq \Delta_0\vee \Delta_1/(\Delta_0\cap \Delta_1)\simeq \Delta_0\vee \Sigma(\Delta_0\cap \Delta_1)$ by Theorem~\ref{T3}, 
since $\Delta_0\cap \Delta_1$ is contractible in $\Delta_0$. 
\end{proof}

\begin{rem}
We note that in the specific case of the independence complexes, Theorems~\ref{T5} and~\ref{thm:split} reduce to the fold lemma of~\cite{En} and the splitting result~\cite{A, MT}.
Similarly, Theorem~\ref{T1*} generalizes the following splitting result of Adamaszek in~\cite{A}:If $\IE(e\cup (G\setminus N[e])$ is contractible in $\IE(G)$, then there is a splitting 
$\IE(G\setminus e)\simeq \IE(G)\vee\Sigma^2 (\IE(G\setminus N[e]))$, where $e$ is an edge of $G$.
\end{rem}
\begin{rem}
Recall that for a simplicial complex $\Delta$, the removal of a pair of faces $\{\gamma,\tau\}$ is called an \emph{elementary collapse}, if $\tau$ is a unique 
maximal face containing $\gamma$ and $\dim(\tau)=\dim(\gamma)+1$. It is a well-known fact in simple-homotopy theory~\cite[Section 11.1]{Bj} that an elementary collapse preserves the homotopy type.
This result can be proven easily by Theorem~\ref{T1*}. Namely, since $\lk_\Delta(\gamma)=*$, it follows that $\Delta'\simeq \Delta$ by Theorem \ref{T1*}, 
where $\Delta=\Delta'\cup \{\gamma\}\cup\{\tau\}$.
\end{rem}

\begin{exmp}Consider the simplicial complex $\Delta_1$ depicted in Figure \ref{Fig-2}. We note that $\lk_{\Delta_1}(\{b,e\})$ is contractible so that by Theorem \ref{T1*}, 
that $\Delta_1$ is homotopy equivalent to the simplicial complex $\Delta_2$ depicted in Figure \ref{Fig-2}. Furthermore, since $\Delta_3[\{d,e\}]*\lk_{\Delta_2}(\{d,e\})$ 
is contractible in ${\Delta_3}$, we have $\Delta_2\simeq \Delta_3\vee \Sigma(S^0*\emptyset)$ by Theorem \ref{T1*} that implies the homotopy equivalence $\Delta_2\simeq S^1\vee S^1$. 
\end{exmp}

\begin{figure}[ht]
\begin{center}
\psfrag{H}{$\Delta_1$}\psfrag{G}{$\Delta_2$}\psfrag{K}{$\Delta_3$}
\psfrag{a}{$a$}\psfrag{b}{$b$}\psfrag{c}{$c$}\psfrag{d}{$d$}\psfrag{e}{$e$}
\includegraphics[width=2.7in,height=1in]{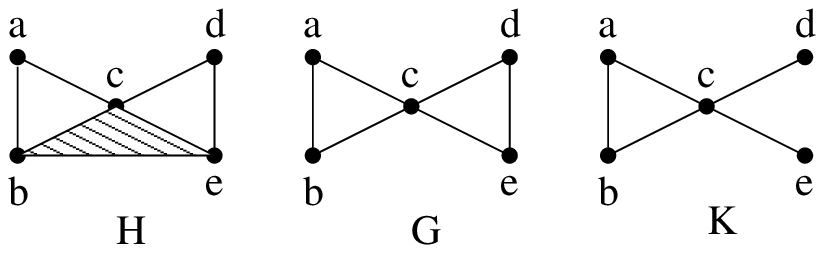}
\end{center}
\caption{}
\label{Fig-2}
\end{figure}

\begin{defn} Let $G=(V,E)$ be a graph and let $\FE$ be a family of graphs. Then \emph{devoid complex} $\DE(G;\FE)$ of the graph $G$ with 
respect to $\FE$ is a simplicial complex on $V$ whose faces are those subsets $S\subseteq V$ such that $G[S]$ is $F$-free for all $F\in\FE$.
\end{defn}
We particularly note that the independence complexes are examples of devoid complexes, since $\IE(G)\cong \DE(G;\{K_2\})$ for any (simple) graph $G$. 
We also abbreviate $\DE(G;\FE)$ to $\DE(G;F)$ whenever $\FE=\{F\}$. 

We next provide a splitting theorem for a particular devoid complex.

\begin{thm}\label{T6} Let $\DE(G;\FE')$ be a devoid complex of a graph $G=(V,E)$ with respect to $\FE'$, where $\FE'=\FE\cup \{C_2\}$, and $\FE$ is 
a family of graphs such that no member of the family $\FE$ is isomorphic to a graph that
contains an isolated vertex. For a vertex $u$ of $G$, if $G[N_G[u]]$ is a double complete graph (i.e. any two vertices induce a double-edge), 
then we have the following homotopy equivalence:
\begin{equation*}
\DE(G;\FE')\simeq\vee_{v\in N(u)}\Sigma(\DE(G;\FE')\setminus \DE[v])
\end{equation*}
\end{thm}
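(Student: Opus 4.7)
My plan is to apply the generalized complementation formula, Theorem~\ref{T2}, to the simplicial complex $\Delta := \DE(G;\FE')$ with the distinguished subset $A := N(u)$. Two facts must be checked: (a) $\dim(\Delta[A]) = 0$, and (b) $\Delta[V\setminus A]$ is contractible. The conclusion then reads $\Delta\simeq \bigvee_{v\in N(u)}\Sigma\,\lk_\Delta(v)$, and by the paper's identification $\lk_\Delta(v) = \Delta \setminus \DE[v]$ (used tacitly in the proof of Theorem~\ref{T5}), this is precisely the asserted formula.

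For (a), I would observe that $N(u)\subseteq N_G[u]$, so any two vertices of $N(u)$ are joined by a double edge by hypothesis. Since $C_2\in\FE'$, no pair $\{v,w\}\subseteq N(u)$ is a face of $\Delta$. Consequently $\Delta[N(u)]$ consists only of singletons and the empty set, which makes it $0$-dimensional.

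For (b), I would verify that $u$ is a cone apex for $\Delta[V\setminus N(u)]$. Pick any $\sigma\in \Delta[V\setminus N(u)]$ with $u\notin\sigma$; I need $\sigma\cup\{u\}\in\Delta$. Since $\sigma$ avoids every neighbor of $u$, the vertex $u$ is isolated in $G[\sigma\cup\{u\}]$. Now suppose some $F'\in\FE'$ appears as a subgraph of $G[\sigma\cup\{u\}]$. If $F'\in\FE$, then $F'$ has no isolated vertex by hypothesis, so the embedded copy of $F'$ cannot use $u$ and must lie entirely in $G[\sigma]$, contradicting $\sigma\in\Delta$. If $F'=C_2$, the same argument applies, since a double-edge has no isolated vertex and $u$ carries no edges in $G[\sigma\cup\{u\}]$. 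Therefore $\sigma\cup\{u\}\in\Delta$, and the cone structure of $\Delta[V\setminus N(u)]$ with apex $u$ forces contractibility.

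The only substantive obstacle is verifying (b), and the no-isolated-vertex hypothesis on members of $\FE$ is precisely what is needed: without it, attaching the isolated vertex $u$ to a face $\sigma$ could introduce a new forbidden subgraph in which $u$ plays the role of an isolated vertex of some $F\in\FE$. Once (a) and (b) are in place, Theorem~\ref{T2} delivers $\Delta\simeq\bigvee_{v\in N(u)}\Sigma\,\lk_\Delta(v)$, and the stated form of the wedge follows from the notational convention $\lk_\Delta(v) = \Delta \setminus \DE[v]$.
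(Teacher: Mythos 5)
Your proposal is correct and follows exactly the same route as the paper, which simply observes that $A=N(u)$ meets the hypotheses of Theorem~\ref{T2} and cites it; you have supplied the (omitted but necessary) verifications that $\Delta[N(u)]$ is $0$-dimensional because the double edges in $G[N[u]]$ force every pair in $N(u)$ to be a non-face, and that $\Delta[V\setminus N(u)]$ is a cone with apex $u$ precisely because no member of $\FE'$ has an isolated vertex.
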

\begin{proof}
Since the set $A=N(u)$ satisfies $\dim(\DE(G;\FE')[A])=0$, and $\DE(G;\FE')[V\setminus A]$ is contractible, the result follows from Theorem \ref{T2}.
\end{proof}

As an application of Theorem~\ref{T5}, we next calculate the homotopy type of the devoid complex $\DE(P_n;P_k)$.
\begin{thm}\label{thm:paths} For the devoid complex $\DE(P_n;P_k)$, the following homotopy equivalence holds:
\begin{equation*}
\DE(P_n;P_k)\simeq
\begin{cases}
S^{tk-t-1}, & \text{if $n=(k+1)t$},\\
S^{tk-t+k-2}, & \text{if $n=(k+1)t+k$},\\
*, & \text{otherwise}\\
\end{cases}
\end{equation*}
where $n\geq k$.
\end{thm}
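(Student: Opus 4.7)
The plan is to derive the recurrence
\[
\DE(P_n;P_k)\simeq \Sigma^{k-1}\DE(P_{n-k-1};P_k)
\]
via a single application of Theorem~\ref{T5}, and then to induct on $n$. After identifying $V(P_n)$ with $\{1,\ldots,n\}$, the faces of $\Delta:=\DE(P_n;P_k)$ are precisely the subsets $S\subseteq V(P_n)$ that contain no $k$ consecutive integers, since $P_n[S]$ is a disjoint union of sub-paths whose vertex counts record the maximal runs of consecutive elements of $S$. For the base cases: when $n<k$ the complex is the full simplex on $V(P_n)$ and is therefore contractible (with the convention $\DE(P_0;P_k)=\{\emptyset\}\simeq S^{-1}$), while when $n=k$ the only minimal non-face is $V(P_k)$, so $\DE(P_k;P_k)=\partial\Delta^{k-1}\simeq S^{k-2}$.

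For $n\geq k+1$, I would apply Theorem~\ref{T5} with $u=1$ and $v=k+1$. If $H$ is a face of $\Delta$ satisfying $H\cup\{1\}\notin\Delta$, the newly produced $k$-consecutive run in $H\cup\{1\}$ must begin at $1$, forcing $\{2,3,\ldots,k\}\subseteq H$; but then $\{2,3,\ldots,k+1\}\subseteq H\cup\{k+1\}$ is again a $k$-consecutive run, so $H\cup\{k+1\}\notin\Delta$. Hence $\DE(1)\subseteq\DE(k+1)$, and Theorem~\ref{T5} yields $\Delta\simeq\dl_\Delta(k+1)$. Because $k+1$ has been deleted, no $k$-consecutive run in a face of $\dl_\Delta(k+1)$ can cross position $k+1$, so the constraint on $S\subseteq V(P_n)\setminus\{k+1\}$ decouples into independent constraints on $S\cap\{1,\ldots,k\}$ and $S\cap\{k+2,\ldots,n\}$. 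This produces
\[
\dl_\Delta(k+1)\cong \DE(P_k;P_k)*\DE(P_{n-k-1};P_k)\simeq S^{k-2}*\DE(P_{n-k-1};P_k)\simeq \Sigma^{k-1}\DE(P_{n-k-1};P_k),
\]
using the base case $\DE(P_k;P_k)\simeq S^{k-2}$ together with the standard identity $S^{k-2}*X\simeq \Sigma^{k-1}X$.

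Finally, writing $n=(k+1)q+r$ with $0\leq r\leq k$ and iterating the recurrence $q$ times gives $\Delta\simeq \Sigma^{(k-1)q}\DE(P_r;P_k)$. The three cases $r=0$, $r=k$, and $1\leq r\leq k-1$ then reduce to suspending $S^{-1}$, $S^{k-2}$, or a contractible space $(k-1)q$ times, producing $S^{(k-1)q-1}$, $S^{(k-1)q+k-2}$, or a point, in agreement with the three listed cases of the statement (with $t=q$). The only genuine content lies in verifying the inclusion $\DE(1)\subseteq\DE(k+1)$; the linear structure of $P_n$ makes this transparent, and everything else is direct bookkeeping.
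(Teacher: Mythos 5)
Your proof is correct and takes essentially the same route as the paper: both rely on the fold $\DE(1)\subseteq\DE(k+1)$ via Theorem~\ref{T5} to delete the $(k+1)$-st vertex, and then recognize the resulting complex as a join involving $\DE(P_k;P_k)\simeq S^{k-2}$. The paper performs all $t$ (or $t+1$) such deletions at once and then reads off the join of disjoint $k$-paths, whereas you recurse one block at a time; this is purely a difference in presentation.
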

\begin{proof} We begin by choosing a different labeling of vertices of $P_n$ which is more suitable for our purposes. 
So, assume that $n=(k+1)t+d$, where $0\leq d\leq k$, and let the vertices of $P_n$ are labeled as $ij$ 
where $1\leq i\leq \lceil \frac{n-d}{k+1}\rceil$, $1\leq j\leq k+1$, and the vertices $ij$ and $rl$ form an edge if and only if $i=r$ and $|j-l|=1$ or $|i-r|=1$ and $j=k+1$, $l=1$.
\begin{enumerate}
\item Suppose that $d=0$, i.e. $n=(k+1)t$. Since $\DE(11)\subseteq \DE(1(k+1))$, we have $\DE(P_n;P_k)\simeq \DE(P_n\setminus 1(k+1);P_k)$. 
Since $\DE(21)\subseteq \DE(2(k+1))$ for the complex $\DE(P_n\setminus 1(k+1);P_k)$, we have $\DE(P_n\setminus 1(k+1);P_k)\simeq \DE(P_n\setminus \{1(k+1),2(k+1)\};P_k)$. 
If we continue in this way,  we conclude that $$\DE(P_n;P_k)\simeq \DE(P_n\setminus \{1(k+1),2(k+1),\dots,t(k+1)\};P_k).$$
The result now follows by noticing that the graph $(P_n\setminus \{1(k+1),2(k+1),\dots,t(k+1)\}$ is isomorphic to $t$ disjoint $k$-paths.
\item If $d\leq k-1$, then $\DE(P_n;P_k)$ is contractible, since when we apply the same procedure as in $(1)$, we obtain $(t+1)$ disjoint paths, and one of them has length $d$.
\item Now suppose that $d=k$. Applying the same procedure as above, the resulting graph is isomorphic to $(t+1)$ disjoint $k$-paths.
\end{enumerate}
\end{proof}

Our next task is to calculate the homotopy type of $\DE(F;P_3)$, where $F$ is a forest. However, we first need some technical results.

\begin{defn}Let $F$ be a forest and let $H$ be the subgraph of $F$ obtained by removing all leaves in $F$.  
Then a vertex $x$ in $F$ is called a \emph{saddle vertex} of $F$ if the degree of $x$ in $H$ is $1$ or $0$.
\end{defn}
\begin{lem}
Let $T$ be a tree with $|T|\geq 3$. Then there exists at least one saddle vertex $x$ of $T$.
\end{lem}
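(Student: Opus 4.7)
The plan is to reduce the claim to the elementary fact that every non-empty forest has a vertex of degree at most $1$. Let $H$ denote the subgraph of $T$ obtained by removing all leaves. Since the saddle vertices of $T$ are precisely the vertices of $H$ whose degree in $H$ is $0$ or $1$, it suffices to prove that $H$ is non-empty and to exhibit such a vertex.

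First I would show that $V(H)\neq\emptyset$. Since $T$ is a tree with $|T|\geq 3$, it has $|T|-1\geq 2$ edges, so the degree sum $2(|T|-1)$ strictly exceeds $|T|$. Consequently not every vertex can have degree $1$, so $T$ possesses at least one vertex of degree $\geq 2$, i.e., a non-leaf; this vertex lies in $H$.

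Next, since $H$ is the induced subgraph of the tree $T$ on the set of non-leaf vertices, $H$ is a forest. It is then enough to observe that any non-empty forest contains a vertex of degree at most $1$. One clean way to see this is to take a longest path $x_0, x_1, \ldots, x_r$ in a connected component of $H$: by maximality of the path, every neighbor of $x_0$ in $H$ must already lie on the path, and since $H$ is acyclic, the only such neighbor is $x_1$ (or none at all if the component is a single vertex). Thus $x_0$ has degree $0$ or $1$ in $H$, and is therefore a saddle vertex of $T$.

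The argument is essentially routine; the only thing to watch is that the definition of saddle vertex refers to the degree in $H$, so one must first ensure $H$ is non-empty before invoking the existence of a low-degree vertex in a forest. The case $|T|=3$ is already illustrative: $H$ consists of a single isolated vertex, which is a saddle vertex of degree $0$.
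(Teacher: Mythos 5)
Your proof is correct and follows essentially the same route as the paper's one-line argument: remove the leaves, note that what remains is a forest, and use the fact that a (non-empty) forest has a vertex of degree at most $1$. You simply supply two details the paper leaves implicit — that $H$ is non-empty when $|T|\geq 3$, and a short longest-path argument for the low-degree vertex — both of which are sound.
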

\begin{proof}
After removing all leaves of the tree $T$, we obtain a forest, and every forest has a vertex of degree $0$ or degree $1$.
\end{proof}

\begin{lem}\label{L2}Let $T$ be a tree with $|T|\geq 3$. Suppose that $T'=T\setminus N[x]$ and $T''=T\setminus (N[x]\cup N[w])$, 
where $x$ is a saddle vertex of $T$, and $w$ is the unique non-leaf neighbour of $x$ (if exists). Then we have the followings:
\begin{enumerate}
\item $\ind_3(T'')\leq \ind_3(T)-1$.
\item If $d_G(x)\geq 3$, then $\ind_3(T')=\ind_3(T)-1$.
\end{enumerate}
\end{lem}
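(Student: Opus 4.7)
The plan is to prove both inequalities by explicitly exploiting the saddle structure at $x$, which forces $x$ to have leaf neighbors that drastically restrict how induced $P_3$-matchings can interact with $N[x]$.

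For part (1), I would work in the direction $\ind_3(T) \geq \ind_3(T'') + 1$. Since $x$ is a saddle vertex with $d_H(x) = 1$ and unique non-leaf neighbor $w$, the inequality $d_T(x) \geq 2$ (forced by $x \in V(H)$) guarantees at least one leaf neighbor $\ell$ of $x$ distinct from $w$. Pick such an $\ell$ and form the $P_3$ given by $\ell x w$. Take any maximum induced $P_3$-matching $M''$ of $T''$; I would argue that $M'' \cup \{\ell x w\}$ is an induced $P_3$-matching of $T$. Vertex-disjointness is immediate since $V(M'') \subseteq V(T'') = V(T) \setminus (N[x] \cup N[w])$. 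The only nontrivial step is ruling out extra edges: any $u \in V(M'')$ avoids $N[x] \cup N[w]$, so $u$ is not adjacent to $x$ or $w$ in $T$; and $u$ is not adjacent to $\ell$ either, because $\ell$ is a leaf whose only neighbor is $x$. Hence no spurious edges arise, and $|M''| + 1 \leq \ind_3(T)$.

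For part (2), I would prove both directions. The hypothesis $d_T(x) \geq 3$ combined with $d_H(x) \leq 1$ yields at least two leaf neighbors $\ell_1, \ell_2$ of $x$. For the lower bound $\ind_3(T) \geq \ind_3(T') + 1$, I would augment a maximum induced $P_3$-matching $M'$ of $T'$ by the $P_3$ $\ell_1 x \ell_2$, with the same verification as in (1): vertices of $V(T')$ avoid $N[x]$, and the leaves $\ell_1, \ell_2$ have only $x$ as a neighbor, so no extra edges appear.

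For the matching upper bound $\ind_3(T) \leq \ind_3(T') + 1$, I would take a maximum induced $P_3$-matching $M$ of $T$ and establish the key structural claim: \emph{at most one $P_3$ in $M$ intersects $N[x]$}. The argument splits naturally: if some $P_3 \in M$ uses $x$, it lies entirely in $N[x]$ (its other vertices being neighbors of $x$), the induced condition applied to edge $xw$ then forbids any other $P_3$ of $M$ from using $w$, and no $P_3$ can use a leaf $\ell_i$ without using $x$; if no $P_3$ uses $x$, then no $P_3$ uses any leaf $\ell_i$ either (leaves have only $x$ as a neighbor), so the only way to touch $N[x]$ is through $w$, and at most one $P_3$ can contain $w$. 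In either case, removing that distinguished $P_3$ (if it exists) yields an induced $P_3$-matching contained in $T'$, giving $|M| - 1 \leq \ind_3(T')$.

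The main obstacle is the upper bound in (2), specifically the structural claim bounding to one the number of $P_3$'s in $M$ that meet $N[x]$. Everything else (leaf extensions, vertex-disjointness, induced-ness of restrictions) is routine once the saddle geometry is spelled out, but this claim is where the combinatorics of the induced condition and the saddle property at $x$ must be combined carefully.
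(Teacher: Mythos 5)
Your handling of part~(1) and of the lower bound in part~(2) matches the paper's: in both cases you extend a maximum induced $P_3$-matching of the smaller tree by a $3$-path built out of $x$ together with leaf/non-leaf neighbors, and the saddle property of $x$ guarantees the required leaf neighbors exist. For the upper bound $\ind_3(T)\leq\ind_3(T')+1$ in~(2), your organization differs from the paper's. The paper runs a case analysis on the one $P_3$ of $M$ that can meet $N[x]$ (the shapes $\ell_1 x\ell_2$, $axw$, $xwb$, $wbc$), and in each case exhibits an explicit subgraph of $T'$ containing $M$ minus that $P_3$, with a small detour to discard the leaf neighbors of $x$ that become isolated. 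You replace this with a single structural claim, namely that at most one $P_3\in M$ meets $N[x]$, and then observe that removing it yields an induced $P_3$-matching living in $T'$. Your version is arguably cleaner, since it avoids both the isolated-vertex technicality and the case bookkeeping.

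There is, however, a genuine misstep in your justification of the structural claim. You assert that if some $P\in M$ uses $x$, then $P$ lies entirely in $N[x]$ ``(its other vertices being neighbors of $x$)'' and then invoke the induced condition on the edge $xw$. That containment fails when $x$ is an \emph{endpoint} of $P$: since $w$ is the unique non-leaf neighbor of $x$, the only way $x$ can be an endpoint is $P=x\,w\,b$ with $b\in N(w)\setminus\{x\}$, and in a tree $b\notin N[x]$. In that subcase the edge $xw$ lies \emph{inside} $P$, so the induced condition is vacuous there and is the wrong tool; what one needs is plain vertex-disjointness, as $w$ is already covered by $P$, so no other $P_3$ can touch it. The structural claim itself is correct, and your argument is salvaged once you split the ``$P$ uses $x$'' case into ``$x$ is the middle vertex'' (the extra-edge argument on $xw$ applies) versus ``$P=xwb$'' (vertex-disjointness applies). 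With that repair, the remainder of your proof goes through unchanged.
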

\begin{proof}
Suppose that $T$ is a tree with $|T|\geq 3$ and let $T'$ and $T''$ be the subgraphs of $T$ such that $T'=T\setminus N[x]$ and $T''=T\setminus (N[x]\cup N[w])$, 
where $x$ is a saddle vertex of $T$. Moreover, let $w$ be the unique non-leaf neighbour of $x$ if it exists.
\begin{enumerate}
\item Let $a$ be a leaf neighbour of $x$ in $T$. Assume that $M''$ is a maximum induced $P_3$-matching of $T''$. Then $M''\cup \{P\}$ is an induced $P_3$-matching of $T$, 
where $P$ is the $3$-path with vertex set $\{a,x,w\}$ so that the inequality $\ind_3(T'')\leq \ind_3(T)-1$ holds.
\item Now assume that $d_G(x)\geq 3$. Let $a,u\in N(x)$, $b\in N(w)$ and $c\in N(b)$ be arbitrary vertices of $T$ such that $a,u\neq w$, $b\neq x$ and $c\neq w$. 
If $M'$ is a maximum induced $P_3$-matching of $T'$, then $M'\cup \{P\}$ is an induced $P_3$-matching of $T$, 
where $P$ is the $3$-path with vertices $a,x,u$; hence, the inequality $\ind_3(T')\leq \ind_3(T)-1$ holds.

Now suppose that $M$ is a maximum induced $P_3$-matching of $T$. If $M$ contains a $3$-path from the subgraph $T[N[x]\setminus \{w\}]$, 
then $M$ can not contain a $3$-path with a vertex $w$. We therefore have $\ind_3(T)=|M|=1+\ind_3(T')$. 
If $M$ contains a $3$-path $P$ with vertex set $\{a,x,w\}$, then $M$ can not contain any $3$-path with a vertex in $\{a,x,w\}$ or any neighbours of them. 
We obtain the subgraph $T''$ of $T$ by removing these vertices. Notice that $T''$ is a subgraph of $T'$. 
We therefore have $\ind_3(T)=|M|=1+\ind_3(T'')\leq 1+\ind_3(T')$. If $M$ contains a $3$-path $P$ with vertices $x,w,b$, 
then $M$ can not contain any $3$-path with a vertex in $\{x,w,b\}$ or any neighbours of them. 
Let $T_1$ be the subgraph of $T$ obtained by removing these vertices. We note in this case that $T_1$ is a subgraph of $T'$. 
Therefore, we have $\ind_3(T)=|M|=1+\ind_3(T_1)\leq 1+\ind_3(T')$. If $M$ contains a $3$-path $P$ with vertex set $\{w,b,c\}$, 
then $M$ can not contain any $3$-path with a vertex in $\{w,b,c\}$ or any neighbours of them. 
Let $T_2$ be the subgraph of $T$ obtained by removing these vertices. It is clear that $T_2$ contains all leaf neighbours of $x$ as isolated vertices. 
Since these vertices have no contribution to the number $\ind_3(T_2)$, we remove them so that the remaining graph $T_2^*$ is a subgraph of $T'$. 
Therefore, we have $\ind_3(T)=|M|=1+\ind_3(T_2)=1+\ind_3(T_2^*)\leq 1+\ind_3(T')$ which proves the inequality $\ind_3(T)\leq 1+\ind_3(T')$.
\end{enumerate}
\end{proof}

\begin{thm}\label{thm:forest} Let $F$ be a forest. Then $\DE(F;P_3)$ is contractible or is homotopy equivalent to a wedge of spheres of dimension at most $2\ind_3(F)-1$.
\end{thm}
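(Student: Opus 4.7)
The plan is strong induction on $|V(F)|$, using Theorem~\ref{T1} together with Lemma~\ref{L2}. The base case $|V(F)|\le 2$ is immediate because there is no $P_3$, making $\DE(F;P_3)$ the full simplex. If $F=F_1\sqcup F_2$ is disconnected, a direct check gives $\DE(F;P_3)\cong \DE(F_1;P_3)*\DE(F_2;P_3)$, and since $\ind_3$ is additive over components and the join of wedges of spheres $\bigvee_i S^{d_i}$ and $\bigvee_j S^{e_j}$ is a wedge of spheres $\bigvee_{i,j}S^{d_i+e_j+1}$, the induction hypothesis transfers the dimension bound (the join of a contractible space with anything is contractible). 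Hence I may assume $F$ is a tree $T$ with $|T|\ge 3$, and by the preceding saddle vertex lemma I fix a saddle vertex $x$ of $T$ together with a leaf neighbor $a$ of $x$.

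For the first case, suppose $x$ has a second leaf neighbor $b$. I would apply Theorem~\ref{T1} at $v=a$, verifying that $\lk(a)\subseteq \operatorname{star}_{\dl(a)}(b)$: for any $H\in\lk(a)$ with $x\notin H$ the vertex $b$ is isolated in $H\cup\{b\}$, while if $x\in H$ then the $P_3$-freeness of $H\cup\{a\}$ already forces $H\cap N(x)=\emptyset$, so no $b$-$x$-$y$ triple can appear. This yields $\DE(T;P_3)\simeq \DE(T\setminus a;P_3)\vee \Sigma\lk(a)$. Splitting $\lk(a)$ according to whether $x\in H$ presents it as the union $\DE(T\setminus\{a,x\};P_3)\cup (\{x\}*\DE(T\setminus N[x];P_3))$, meeting in $\DE(T\setminus N[x];P_3)$. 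Both subcomplexes are contractible (the first because $b$ is isolated in $T\setminus\{a,x\}$, the second by construction), so the pushout of two contractibles gives $\lk(a)\simeq \Sigma\DE(T\setminus N[x];P_3)$, and altogether $\DE(T;P_3)\simeq \DE(T\setminus a;P_3)\vee \Sigma^2\DE(T\setminus N[x];P_3)$. The induction hypothesis treats both summands, and the inequality $\ind_3(T\setminus N[x])\le \ind_3(T)-1$, obtained by adjoining the induced $P_3$ on $\{a,x,b\}$ to any induced $P_3$-matching of $T\setminus N[x]$ (which is Lemma~\ref{L2}(2) when $d_T(x)\ge 3$), supplies the dimension bound.

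For the second case, $x$ has only one leaf neighbor $a$, forcing its unique non-leaf neighbor $w$ to exist with $d_T(x)=2$. I would apply Theorem~\ref{T1} at $v=x$. Since $T\setminus x$ has $a$ as an isolated vertex, $\dl(x)$ is a cone with apex $a$ and hence contractible, so $\lk(x)$ is automatically null-homotopic in $\dl(x)$; therefore $\DE(T;P_3)\simeq \Sigma\lk(x)$. The condition that $H\cup\{x\}$ be $P_3$-free rules out $\{a,w\}\subseteq H$ and, whenever $w\in H$, also forces $H\cap (N(w)\setminus\{x\})=\emptyset$. Decomposing $\lk(x)$ according to $H\cap\{a,w\}$ presents it as the union $(\{a\}*\DE(T\setminus N[x];P_3))\cup (\{w\}*\DE(T'';P_3))$ with $T''=T\setminus (N[x]\cup N[w])$. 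A careful verification shows these two cones intersect exactly in $\DE(T'';P_3)$, so the pushout of two contractibles yields $\lk(x)\simeq \Sigma\DE(T'';P_3)$ and hence $\DE(T;P_3)\simeq \Sigma^2\DE(T'';P_3)$. Induction together with Lemma~\ref{L2}(1), which gives $\ind_3(T'')\le \ind_3(T)-1$, closes the dimension bound.

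The main obstacle is the bookkeeping in Case~2: verifying that the two cones $\{a\}*\DE(T\setminus N[x];P_3)$ and $\{w\}*\DE(T'';P_3)$ cover all of $\lk(x)$ and intersect precisely in $\DE(T'';P_3)$ requires a careful reading of the $P_3$-freeness conditions on $H\cup\{x\}$. Once this is in hand, the two suspensions and the inductive dimension bound follow cleanly.
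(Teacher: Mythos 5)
Your proposal is correct, and it takes a genuinely different route from the paper. The paper's proof always begins by applying Theorem~\ref{T1} at the saddle vertex $x$ to get $\DE(T;P_3)\simeq \Sigma\lk_{\DE(T;P_3)}(x)$, then identifies $\lk(x)$ with a devoid complex $\DE(G_x;\{P_3,C_2\})$ of an auxiliary \emph{multigraph} $G_x$ (built by turning certain pairs into double edges and deleting $x$), and finally invokes Theorem~\ref{T6} (a consequence of Bj\"orner's homotopy complementation formula) to split this as a wedge indexed by $N_{G_x}(u)$ all at once, identifying each summand with $\DE(T';P_3)$ or $\DE(T'';P_3)$. You instead avoid the multigraph construction and Theorem~\ref{T6} entirely: in your Case~1 you apply Theorem~\ref{T1} at the \emph{leaf} $a$ rather than at $x$, establishing contractibility of $\lk(a)$ inside $\dl(a)$ via the cone $\operatorname{star}_{\dl(a)}(b)$ on the second leaf, and then compute $\lk(a)\simeq\Sigma\DE(T\setminus N[x];P_3)$ by a two--cone pushout; iterating this peels off leaf neighbors of $x$ one at a time until only your Case~2 ($d_T(x)=2$) remains, which recovers the paper's $\DE(T;P_3)\simeq\Sigma^2\DE(T'';P_3)$ directly. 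Both routes feed the same quantities $\ind_3(T\setminus N[x])$ and $\ind_3(T'')$ into Lemma~\ref{L2}, and you are careful to note that the bound $\ind_3(T\setminus N[x])\le\ind_3(T)-1$ follows from a direct $P_3$-adjunction argument even when $d_T(x)=2$, whereas Lemma~\ref{L2}(2) is stated only for $d_T(x)\ge3$. What the paper's approach buys is a one-shot wedge decomposition; what yours buys is elementarity (no $G_x$, no Theorem~\ref{T6}, only Theorem~\ref{T1} and Mayer--Vietoris--type gluing of two cones) and an explicit reduction to the connected case via the join $\DE(F_1\sqcup F_2;P_3)\cong\DE(F_1;P_3)*\DE(F_2;P_3)$ and additivity of $\ind_3$, which the paper leaves implicit.
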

\begin{proof} It is enough to prove this for a tree $T$. If $x$ is a saddle vertex of $T$, the complex $\dl_{\DE(T,\{P_3\})}(x)$ is contractible. We therefore have 
$\DE(T;P_3)\simeq \Sigma \lk_{\DE(T;P_3)}(x)$. Note that $\lk_{\DE(T;P_3)}(x)\cong \DE(G_x;\{P_3,C_2\})$. Here, the graph
$G_x$ is the graph obtained from $T$ by placing an extra edge (resp. a double-edge) between any two vertices $a$ and $b$
if $ab\in E$ (resp.  $ab\notin E$), whenever the set $\{a,b,x\}$ induces a $3$-path, and then deleting the vertex $x$. 
Observe that if $u\in N_T(x)$ and $u$ is a leaf, then $N_{G_x}[u]$ induces a double complete graph in $G_x$. 
Now we apply Theorem \ref{T6} to conclude the homotopy equivalence: $$\DE(T;P_3)\simeq\vee_{v\in N_{G_x}(u)}\Sigma^2(\DE(G_x;\{P_3,C_2\})\setminus \DE[v]).$$ 
If $T\cong K_{1,n}$, then $\DE(T;P_3)\simeq\vee_{v\in N_{G_x}(u)}\Sigma^2S^{-1}$. Thus $\DE(T;P_3)$ is homotopy equivalent to a wedge of spheres of dimension $1$. 
On the other hand, if $T\ncong K_{1,n}$, then there exists a vertex $w$ in $T$ which is the unique non-leaf neighbour of $x$. 
If $v\neq w$, then we have $\DE(G_x;\{P_3,C_2\})\setminus \DE[v]\simeq\DE(T';P_3)$, where $T'=T\setminus N[x]$, whereas if $v=w$, 
then $\DE(G_x;\{P_3,C_2\})\setminus\DE[v]\simeq \DE(T'';P_3)$, where $T''=F\setminus (N_T[x]\cup N_T[w])$.

Now, if $d_T(x)=2$, we then have  $$\DE(T;P_3)\simeq\vee_{v\in N_{G_x}(u)}\Sigma^2(\DE(G_x;\{P_3,C_2\})\setminus\DE[v])=\Sigma^{2}\DE(T'';P_3).$$
It follows that $\ind_3(T'')\leq \ind_3(T)-1$ by Lemma \ref{L2}. Therefore, $\DE(T'';P_3)$ is contractible or is homotopy equivalent to a wedge of spheres of dimension 
at most $2(\ind_3(T)-1)-1)$ by the induction; hence, the complex $\DE(T;P_3)$ is contractible or is homotopy equivalent to a wedge of spheres of dimension at most $2\ind_3(T)-1$ as claimed.

If $d_T(x)>2$, then $\DE(T;P_3)\simeq\vee_{|N_{G_x}(u)|-1}\Sigma^{2}\DE(T';P_3)\vee\Sigma^{2}\DE(T'';P_3)$. We analyze every component of this wedge keeping 
in mind that $\ind_3(T')=\ind_3(T)-1$ and $\ind_3(T'')\leq \ind_3(T)-1$ by Lemma \ref{L2}. If a component of this wedge is not contractible 
then $\DE(T';P_3)$ and $\DE(T'';P_3)$ is homotopy equivalent to a wedge of spheres of dimension at most $2\ind_3(T')-1$ and $2\ind_3(T'')-1$, respectively. Applying the suspensions, we obtain a wedge of spheres of dimension at most $2\ind_3(T')-1+2=2\ind_3(T')+1=2(\ind_3(T)-1)+1=2\ind_3(T)-1$ or $2\ind_3(T'')-1+2\leq 2(\ind_3(T)-1)+1\leq 2\ind_3(T)-1$. This completes the proof.
\end{proof}

\section{Matching Trees For a Simplicial Complex}
This section is devoted to the construction of a Morse matching through matching trees for an arbitrary simplicial complex. 
Our departure in this direction begins by generalizing the notion of matching trees which is due to Bousquet-M\'elou, et al. in \cite{BLN} for the case of the independence complexes.

Main motivation for this section comes from the following idea. Let $\Delta$ be a simplicial complex on a vertex set $V$. 
We pick an element $p$ of $V$, and define $\Delta_p=\{I\in \Delta\colon\;\textrm{if}\;H\in\DE(p)\;\textrm{then}\;H\nsubseteq I\}$. Then, we notice that the set of pairs $(I,I\cup \{p\})$ forms a perfect matching of $\Delta_p$; hence, a matching of $\Delta$. We call $p$ the \emph{pivot} of this matching. The unmatched elements of $\Delta$ are exactly the faces of $\Delta$ which contain at least one element from the family $\DE(p)$. We now choose another pivot $p'$ to match some elements of $\Delta\setminus \Delta_p$, and continue the same procedure.

Now let us describe the construction of a matching tree to facilitate the study. The nodes of the matching tree represent sets of yet unmatched elements, and they are of the form
\begin{equation*}
\Sigma(A,\mathcal{B}):=\{I\in \Delta\colon A\subseteq I\;\textrm{and}\;\emptyset\neq B\nsubseteq I\;\textrm{for all}\;B\in \mathcal{B}\},
\end{equation*}
where $A\subseteq V$, and for all $D\in \DE(A)$, there exists some $B\in \mathcal{B}$ such that $B\subseteq D$, where $\mathcal{B}$ is a family of subsets 
of $V$ for which $A\cap V(\mathcal{B}^*)=\emptyset$ and $\mathcal{B}^*$ is the subfamily of $\mathcal{B}$ consisting of all one point sets of $\mathcal{B}$ with ground set $V(\mathcal{B}^*)$. The root of the matching tree is $I(\emptyset,\{\emptyset\})=\Delta$, and other nodes are defined recursively as follows. If the node is the empty set, it is declared as a leaf. Otherwise, the node is of the form $\Sigma(A,\mathcal{B})$, which is a non-empty set. If $A\cup V(\mathcal{B}^*)=V$, then $\Sigma(A,\mathcal{B})= \{A\}$ is a node with an unmatched element of cardinality $|A|$ in which case we also declare this node a leaf. The remaining nodes are of the form $\Sigma(A,\mathcal{B})$ with $A\cup V(\mathcal{B}^*)\neq V$. 
Furthermore, if $\Sigma(A,\mathcal{B})$ is a child of $\Sigma(A',\mathcal{B}')$, we then define $\DE_{up}^{\mathcal{B}'}(A)$ to be the set consisting of 
the minimal elements of the family $\DE(A)\cup \{B'\setminus A\colon B'\in\mathcal{B}'\}$. Now we pick a vertex $p$ in $V'=V\setminus (A\cup V(\mathcal{B}^*))$ and proceed as follows:
\begin{itemize}
\item If there exists at most one element $\gamma\in \DE_{up}^{\mathcal{B}}(p)$ such that $B\nsubseteq \gamma$ for all $B\in \mathcal{B}$, we define $\Delta(A,\mathcal{B},p)$ 
to be the subset of $\Sigma(A,\mathcal{B})$ formed of sets that do not contain any element from 
$\DE_{up}^\mathcal{B}(p)$, that is, 
$$\Delta(A,\mathcal{B},p)=\{I\in \Delta\colon A\subseteq I, \emptyset\neq B\nsubseteq I\;\textrm{for all}\;B\in \mathcal{B}\;\textrm{and if}\;H\in\DE_{up}^\mathcal{B}(p)\;
\textrm{then}\;H\nsubseteq I\}.$$ 

Then, $M(A,\mathcal{B},p)=\{(I,I\cup \{p\})\colon I\in \Delta(A,\mathcal{B},p)\;\textrm{and}\;p\notin I\}$ gives a perfect matching of 
$\Delta(A,\mathcal{B},p)$ and $p$ is called a pivot of this matching.

We should note that if $I\in \Delta(A,\mathcal{B},p)$, then $I\cup \{p\}$ is in $\Delta(A,\mathcal{B},p)$: We clearly have $A\subseteq I\cup\{p\}$. 
Moreover, if $H\in\DE_{up}^{\mathcal{B}}(p)$, then $p\notin H$ so that $H\nsubseteq I\cup\{p\}$. 
We claim that $B\nsubseteq I\cup \{p\}$ for any $B\in \mathcal{B}$. To verify this, assume otherwise that there exists $B\in \mathcal{B}$ such that $B\subseteq I\cup \{p\}$.
However, this forces $p\in B$, since $B\nsubseteq I$. It follows that there exists some $C\in \DE_{up}^{\mathcal{B}}(p)$ such that $C\subseteq B\setminus \{p\}$; hence,
$C\nsubseteq I\cup \{p\}$ which implies that $B\nsubseteq I\cup \{p\}$, a contradiction. Therefore, we have $B\nsubseteq I\cup \{p\}$ as claimed.

Now, associate a unique child to the node $\Sigma(A,\mathcal{B})$, namely the set $U=\Sigma(A,\mathcal{B})\setminus \Delta(A,\mathcal{B},p)$ of unmatched elements. 
Note that if for any $\gamma\in\DE_{up}^{\mathcal{B}}(p)$, there exists some $B\in\mathcal{B}$ such that $B\subseteq \gamma$, then $U=\emptyset$. 
If there exists exactly one element $\gamma\in \DE_{up}^{\mathcal{B}}(p)$ such that $B\nsubseteq \gamma$ for all $B\in \mathcal{B}$, 
then $U=\Sigma(A\cup \gamma,\mathcal{B}\cup \DE_{up}^{\mathcal{B}}(\gamma))$. The $3$-tuple $(A,\mathcal{B},p)$ is called a \emph{matching site} of the tree. 
We label the new edge by the pivot $p$.

\item If there exists at least two elements $\gamma,\gamma'\in \DE_{up}^{\mathcal{B}}(p)$ satisfying $B\nsubseteq \gamma$ and $B\nsubseteq \gamma'$ 
whenever $B\in \mathcal{B}$, then the set $U$ is not of the form $\Sigma(A',\mathcal{B}')$. This is because some of the unmatched elements $I$ contain $\gamma$, 
some others do not. However, we eventually want all nodes to be of the form $\Sigma(A',\mathcal{B}')$. Such a problem can be resolved by first splitting the original set 
$\Sigma(A,\mathcal{B})$ into two disjoint subsets of the form $\Sigma(A',\mathcal{B}')$. By using, say $\gamma$, 
we can write $\Sigma(A,\mathcal{B})= \Sigma(A,\mathcal{B}\cup \{\gamma\})\uplus \Sigma(A\cup\gamma,\mathcal{B}\cup\DE_{up}^{\mathcal{B}}(\gamma))$ and then examine 
each subset separately. So, the node $\Sigma(A,\mathcal{B})$  has two children. 
The left child is  $\Sigma(A,\mathcal{B}\cup \{\gamma\})$ and the right child is $\Sigma(A\cup\gamma,\mathcal{B}\cup \DE_{up}^{\mathcal{B}}(\gamma))$. 
The 3-tuple $(A,\mathcal{B},\gamma)$ is called a \emph{splitting site} of the tree. We label the two new edges by $\gamma$.
\end{itemize}
\begin{exmp}Consider the simplicial complex $\Delta$ depicted in Figure \ref{Fig-3}. 
\end{exmp}
\begin{figure}[ht]
\begin{center}
\psfrag{G}{$\Delta$}\psfrag{1}{$1$}\psfrag{2}{$2$}\psfrag{3}{$3$}
\psfrag{4}{$4$}\psfrag{5}{$5$}
\includegraphics[width=1in,height=1in]{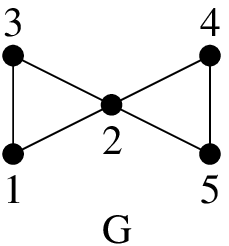}
\end{center}
\caption{}
\label{Fig-3}
\end{figure}
We will construct a matching tree of $\Delta$, and to simplify the notation, we write $\overline{\DE}(p)$ instead of $\DE_{up}^{\mathcal{B}}(p)$. 
We choose the vertex $1$ as a pivot. The sets $\{2,3\}$ and $\{4\}$ belong to the family $\overline{\DE}(1)$. So, the set  $\overline{\DE}(1)$ 
has at least two elements. Let us choose one element from $\overline{\DE}(1)$, say $\{2,3\}$. Then $\Sigma(\emptyset,\{\emptyset\})$ has two children, namely they 
are $\Sigma(\emptyset,\{\{2,3\}\})$ and $\Sigma(\{2,3\},\{\{1\},\{4\},\{5\}\})$. We illustrate it in Figure \ref{Fig-4} and 
label the edge with the chosen set $\{2,3\}$. We have that $\Sigma(\{2,3\},\{\{1\},\{4\},\{5\}\})= \{\{2,3\}\}$. We now choose 
the vertex $2$ as a pivot for the node $\Sigma(\emptyset,\{\{2,3\}\})$. Then the sets $\{3\}$ and $\{4,5\}$ belong to the family $\overline{\DE}(2)$. 
We next consider the element $\{3\}$ from the family $\overline{\DE}(2)$. Then $\Sigma(\emptyset,\{\{2,3\}\})$ has two children, namely they are 
$\Sigma(\emptyset,\{\{3\}\})$ and $\Sigma(\{3\},\{\{2\},\{4\},\{5\}\})$. Now let us first find the children of $\Sigma(\{3\},\{\{2\},\{4\},\{5\}\})$.
For that purpose, we choose the vertex $1$ as a pivot, and it is clearly the only possibility. Now, $\mathcal{B}=\{\{2\},\{4\},\{5\}\})$, and for any $\gamma\in \overline{\DE}(1)$, there exists $B\in \mathcal{B}$ such that $B\subseteq \gamma$. 
Thus, the child of $\Sigma(\{3\},\{\{2\},\{4\},\{5\}\})$ is just the empty 
family. Now, consider the vertex $2$ as a pivot for the node $\Sigma(\emptyset,\{\{3\}\})$. The node $\Sigma(\emptyset,\{\{3\}\})$ has only 
one child, namely $\Sigma(\{4,5\},\{\{1\},\{2\},\{3\}\})$, since $\overline{\DE}(2)$ has exactly one element which does not contain $\{3\}$. 
Thus the unmatched elements of $\Delta$ are $\{2,3\}$ and $\{4,5\}$; hence, we conclude that $\Delta\simeq S^1\vee S^1$. 

Our matching tree gives the Morse matching $\{(\emptyset,2),(1,12),(4,24),(5,25),(3,13)\}$.
     
\begin{figure}[ht]
\begin{center}
\psfrag{G}{$\Delta$}\psfrag{1}{$1$}\psfrag{2}{$2$}\psfrag{3}{$3$}
\psfrag{4}{$4$}\psfrag{5}{$5$}\psfrag{a}{$\Sigma(\emptyset,\{\emptyset\})$}
\psfrag{b}{$\Sigma(\emptyset,\{\{2,3\}\})$}
\psfrag{c}{$\Sigma(\{2,3\},\{\{1\},\{4\},\{5\}\})$}
\psfrag{d}{$\Sigma(\emptyset,\{\{3\}\})$}
\psfrag{e}{$\Sigma(\{3\},\{\{2\},\{4\},\{5\}\})$}
\psfrag{f}{$\Sigma(\{4,5\},\{\{1\},\{2\},\{3\}\})$}\psfrag{g}{$\emptyset$}
\psfrag{h}{$\{2,3\}$}\psfrag{k}{$\{3\}$}\psfrag{m}{$1$}\psfrag{n}{$2$}
\includegraphics[width=3in,height=3in]{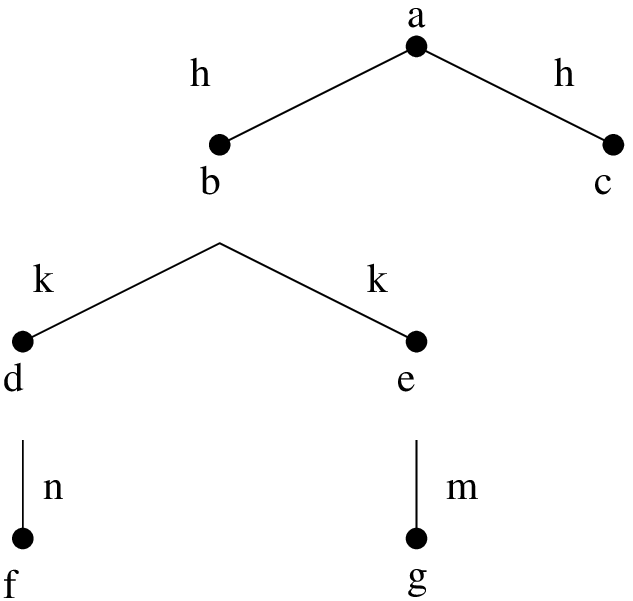}
\end{center}
\caption{}
\label{Fig-4}
\end{figure}

\begin{lem}\label{L3}Every matching tree satisfies the following properties:
\begin{enumerate}
\item For every matching site $(A,\mathcal{B},p)$, the matching $M(A,\mathcal{B},p)$ is a Morse matching of $\Delta(A,\mathcal{B},p)$.
\item Let $(A,\mathcal{B},p)$ be a matching site with a non-empty child $\Sigma(A\cup \gamma,\mathcal{B}\cup \DE_{up}^{\mathcal{B}}(\gamma))$. 
Let $I\in \Delta(A,\mathcal{B},p)$ and $J\in \Sigma(A\cup \gamma,\mathcal{B}\cup \DE_{up}^{\mathcal{B}}(\gamma))$ be given. Then $J\nsubseteq I$.
\item Let $(A,\mathcal{B},\gamma)$ be a splitting site, and let $I\in \Sigma(A,\mathcal{B}\cup\{\gamma\})$ and 
$J\in \Sigma(A\cup \gamma,\mathcal{B}\cup \DE_{up}^{\mathcal{B}}(\gamma))$ be given. Then $J\nsubseteq I$.
\end{enumerate}
\end{lem}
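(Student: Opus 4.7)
The plan is to handle the three parts independently; parts (2) and (3) are quick consequences of unwinding the definitions of $\Sigma(\cdot,\cdot)$ and $\Delta(\cdot,\cdot,\cdot)$, while part (1) follows from the standard ``element matching'' argument for Morse matchings, adapted to the present setting. The main obstacle, as usual, is verifying acyclicity in (1); (2) and (3) are really bookkeeping, but they are the geometric content that will eventually let us glue the local matchings together via Lemma~\ref{L1}.

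For part (2), I would note that $\gamma$ lies in $\DE_{up}^{\mathcal{B}}(p)$ by hypothesis, so the definition of $\Delta(A,\mathcal{B},p)$ forces $\gamma\nsubseteq I$ for every $I\in \Delta(A,\mathcal{B},p)$. On the other hand, by construction $\gamma\subseteq A\cup \gamma\subseteq J$ for every $J\in \Sigma(A\cup\gamma,\mathcal{B}\cup\DE_{up}^{\mathcal{B}}(\gamma))$. If $J\subseteq I$ we would then get $\gamma\subseteq I$, contradicting the previous sentence. Part (3) is handled by exactly the same argument, except that the obstruction $\gamma\nsubseteq I$ now comes from $\gamma\in \mathcal{B}\cup\{\gamma\}$ rather than from membership in $\DE_{up}^{\mathcal{B}}(p)$; the rest of the reasoning is identical.

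For part (1), I would argue directly from the shape of $M(A,\mathcal{B},p)$. Every matched pair has the form $(I,I\cup\{p\})$ with $p\notin I$, and in the modified Hasse diagram a directed cycle must alternate up-edges (reversed matched edges, each of which adjoins the single vertex $p$) with down-edges (unmatched face containments, each of which deletes a single vertex). Starting from an unmatched lower face, the first up-step produces a face containing $p$; the subsequent down-step deletes some vertex $v$. If $v=p$, we return to the same face (no cycle); if $v\neq p$, the resulting face still contains $p$, so it cannot sit at the bottom of another matched pair (which requires $p$ to be \emph{absent}). Thus no alternating cycle of positive length exists, and $M(A,\mathcal{B},p)$ is acyclic. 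One also has to check that $M(A,\mathcal{B},p)$ is genuinely a matching inside $\Delta(A,\mathcal{B},p)$, i.e.\ that both $I$ and $I\cup\{p\}$ belong to $\Delta(A,\mathcal{B},p)$ whenever $I$ does; this is precisely the verification already carried out in the description of the matching tree (using that minimal elements of $\DE_{up}^{\mathcal{B}}(p)$ do not contain $p$, so adding $p$ to $I$ cannot create a forbidden inclusion).

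The only delicate point is being careful with the bookkeeping in the cycle analysis for (1)---in particular, distinguishing the two orientations of edges and making sure the argument uses only that every matched pair differs by the single, fixed vertex $p$. Once this is done, (1)--(3) together supply exactly what is needed in the next step of the paper: the disjointness condition required by Lemma~\ref{L1} to assemble the local matchings $M(A,\mathcal{B},p)$ at the matching sites into a global acyclic matching on $\Delta$.
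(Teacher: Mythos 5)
Your proposal is correct and follows essentially the same route as the paper's own proof: parts (2) and (3) use exactly the observation that $\gamma\subseteq J$ while $\gamma\nsubseteq I$, and part (1) uses the same ``up-edges add the fixed pivot $p$, down-edges delete a vertex different from $p$'' argument to rule out directed cycles. Your write-up is slightly more explicit about why the matching lives inside $\Delta(A,\mathcal{B},p)$ (the paper defers that check to the construction of the matching tree) and about the case analysis in the cycle argument, but the underlying ideas are the same.
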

\begin{proof}
\begin{enumerate}
\item We know that $M(A,\mathcal{B},p)$ is a perfect matching of $\Delta(A,\mathcal{B},p)$. Hence, we need only to verify that the modified Hasse diagram of the 
face poset $\Delta(A,\mathcal{B},p)$ is acyclic. Now, consider a directed edge in the modified Hasse diagram. The up edges join two elements of the 
form $I\setminus \{p\}$ and $I$, so they correspond to adding the vertex $p$. The down edges correspond to deleting a vertex different from $p$. 
This cannot lead to a directed cycle; hence, the claim holds.
\item Assume that $J\subseteq I$. If $J\in \Sigma(A\cup \gamma,\mathcal{B}\cup \DE_{up}^{\mathcal{B}}(\gamma))$, 
then $\gamma\subseteq J$. Moreover, if $I\in \Delta(A,\mathcal{B},p)$, then $\gamma\nsubseteq I$, since $\gamma\in \DE_{up}^{\mathcal{B}}(p)$. However, it follows that
$\gamma\subseteq J\subseteq I$, which in turn implies $\gamma\subseteq I$, a contradiction.
\item If $I\in \Sigma(A,\mathcal{B}\cup \{\gamma\})$ and $J\in \Sigma(A\cup \gamma,\mathcal{B}\cup \DE_{up}^{\mathcal{B}}(\gamma))$, 
then we have $\gamma\nsubseteq I$ and $\gamma\subseteq J$ so that $J\nsubseteq I$ as claimed.
\end{enumerate}
\end{proof}
\begin{thm}
For any simplicial complex $\Delta$ and any matching tree of $\Delta$, the matching of $\Delta$ obtained by taking the union of all partial matchings $M(A,\mathcal{B},p)$ performed at the matching sites is Morse.
\end{thm}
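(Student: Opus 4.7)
The plan is to induct on the structure of the matching tree, combining the partial matchings $M(A,\mathcal{B},p)$ via Jonsson's combination lemma (Lemma \ref{L1}). For each node $\Sigma(A,\mathcal{B})$ of the tree, I will show that the union of the matchings $M(A',\mathcal{B}',p')$ taken over all matching sites in the subtree rooted at that node is an acyclic (Morse) matching on $\Sigma(A,\mathcal{B})$. Applying this to the root $\Sigma(\emptyset,\{\emptyset\})=\Delta$ then yields the theorem.

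First I would observe that the nodes of the matching tree decompose $\Delta$ into pairwise disjoint families: at a matching site $\Sigma(A,\mathcal{B})$ splits as $\Delta(A,\mathcal{B},p)\sqcup \Sigma(A\cup\gamma,\mathcal{B}\cup\DE_{up}^{\mathcal{B}}(\gamma))$ when a single ``surviving'' $\gamma$ exists (and as $\Delta(A,\mathcal{B},p)$ alone otherwise, since then the unique child is empty); at a splitting site the two children partition $\Sigma(A,\mathcal{B})$ by construction; and at a leaf the piece is either empty or the unmatched singleton $\{A\}$. This shows that the union of the $M(A,\mathcal{B},p)$ is a well-defined matching on all of $\Delta$.

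For the induction itself, the base case is a leaf, where the matching is trivially acyclic because it is empty. For the inductive step I treat the two kinds of internal nodes. At a matching site $(A,\mathcal{B},p)$, Lemma \ref{L3}(1) gives that $M(A,\mathcal{B},p)$ is acyclic on $\Delta(A,\mathcal{B},p)$; by the induction hypothesis the union matching on the child $\Sigma(A\cup\gamma,\mathcal{B}\cup\DE_{up}^{\mathcal{B}}(\gamma))$ is acyclic on that child. To glue them via Lemma \ref{L1}, I need that for every $I\in\Delta(A,\mathcal{B},p)$ and every $J$ belonging to some descendant family of the child, $J\nsubseteq I$. This is exactly Lemma \ref{L3}(2) at one level, and the splitting site case is handled identically using Lemma \ref{L3}(3).

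The main (and essentially only) subtlety will be verifying that the non-containment condition of Lemma \ref{L1} propagates from immediate children to arbitrarily deep descendants, so that Lemma \ref{L1} can be iterated all the way up the tree. This follows from the monotonicity of the pair $(A',\mathcal{B}')$ along any descending path: every face $J$ appearing in a descendant of $\Sigma(A\cup\gamma,\mathcal{B}\cup\DE_{up}^{\mathcal{B}}(\gamma))$ still contains the set $A\cup\gamma$, hence contains the witness $\gamma$, which by construction is excluded from every element of $\Delta(A,\mathcal{B},p)$. Once this propagation is recorded, the induction closes and the theorem follows.
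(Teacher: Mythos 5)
Your proposal follows essentially the same argument as the paper: backward induction from the leaves, using Lemma~\ref{L3}(1) for acyclicity of each $M(A,\mathcal{B},p)$, and combining via Jonsson's Lemma~\ref{L1} together with Lemma~\ref{L3}(2)--(3) to handle the disjoint decomposition at matching and splitting sites. The ``propagation to deep descendants'' worry in your last paragraph is actually automatic: Lemma~\ref{L1} is applied one level at a time against the whole child $\Sigma(A',\mathcal{B}')$ (on which the induction hypothesis already gives an acyclic matching), and every descendant family is a subset of that child, so Lemma~\ref{L3}(2)--(3) already says everything you need.
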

\begin{proof}
We prove this by backward induction from the leaves to the root, that is, we show that the union of the partial matchings performed at 
the descendants of a node $\Gamma$ (including $\Gamma$ itself) of the matching tree is a Morse matching of $\Gamma$. We denote this 
matching by $UM(\Gamma)$ (the union of matchings). The empty matching is performed at the leaves of the tree and it is therefore Morse. 
Now consider a non-leaf node $\Sigma(A,\mathcal{B})$ of the tree. Suppose that $(A,\mathcal{B},p)$ is a matching site implying $M(A,\mathcal{B},p)$ 
to be a Morse matching by Lemma \ref{L3}. Since it is a matching site, it has a unique child which is either empty or of the form 
$\Sigma(A\cup\gamma,\mathcal{B}\cup \DE_{up}^{\mathcal{B}}(\gamma))$. If the child is empty, then $UM(\Gamma)=M(A,\mathcal{B},p)$ 
which is Morse, and we are done. If the child is $\Gamma'=\Sigma(A\cup\gamma,\mathcal{B}\cup \DE_{up}^{\mathcal{B}}(\gamma))$, 
then $UM(\Gamma'$) is Morse by induction hypothesis. If we write $\Gamma=\Delta(A,\mathcal{B}, p)\uplus \Gamma'$, then 
the Morse matchings on $\Delta(A,\mathcal{B}, p)$ and $\Gamma'$ are $M(A,\mathcal{B},p)$ and $UM(\Gamma')$ respectively. 
By applying Lemma~\ref{L1} and~\ref{L3}, we conclude that $UM(\Gamma)$ is Morse.

Now suppose that  $(A,\mathcal{B},\gamma)$ is a splitting site. If the children are $\Sigma(A,\mathcal{B}\cup \{\gamma\})$ and
$\Sigma(A\cup\gamma,\mathcal{B}\cup \DE_{up}^{\mathcal{B}}(\gamma))$, then we can write 
$\Gamma=\Sigma(A,\mathcal{B}\cup \{\gamma\})\uplus \Sigma(A\cup\gamma,\mathcal{B}\cup \DE_{up}^{\mathcal{B}}(\gamma))$. 
By the induction hypothesis, we have the Morse matchings $UM(\Sigma(A,\mathcal{B}\cup \{\gamma\})$ and 
$UM(\Sigma(A\cup\gamma,\mathcal{B}\cup \DE_{up}^{\mathcal{B}}(\gamma))$ on $\Sigma(A,\mathcal{B}\cup \{\gamma\})$ and $\Sigma(A\cup\gamma,\mathcal{B}\cup \DE_{up}^{\mathcal{B}}(\gamma))$ 
respectively. Once again, by Lemma~\ref{L1} and~\ref{L3}, the union $UM(\Gamma)$ of $UM(\Sigma(A,\mathcal{B}\cup \{\gamma\})$ and 
$UM(\Sigma(A\cup\gamma,\mathcal{B}\cup \DE_{up}^{\mathcal{B}}(\gamma))$ 
is Morse. This completes the induction. 
\end{proof}

\section{Some Applications Of Matching Trees on Simplicial Complexes}
In this section, we compute the homotopy type of some devoid complexes as an application of the techniques introduced in the previous section.
In more detail, we calculate the homotopy type of $\DE(C_n;P_k)$ for which we first characterize the unmatched elements of $\DE(P_n;P_k)$  
in order to find those of $\DE(C_n;P_k)$. Also note that the case where $k=2$ corresponds to the independence complexes
of cycles whose homotopy types have already been computed by Kozlov~\cite{Koz,Ko}. Furthermore, 
we calculate the homotopy types of dominance complexes of chordal graphs that complements the work of
Marietti and Testa~\cite{MT,MaT}. 

We assume that the graph $P_n$ has vertex set $[1,n]=\{1,2,\dots,n\}$ and two vertices $i$ and $j$ form an edge if and only if $|i-j|=1$.

\begin{lem} For the devoid complex $\DE(P_n;P_k)$, where $n\geq k$ we have the following properties:
\begin{enumerate}
\item if $n=(k+1)t$ then there exists a unique unmatched element of cardinality $tk-t$,
\item if $n=(k+1)t+k$ then there is a unique unmatched element of cardinality $tk-t+k-1$,
\item otherwise, there is no unmatched element in $\DE(P_n,\{P_k\})$.
\end{enumerate}
\end{lem}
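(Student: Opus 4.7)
The plan is to build a single-branch matching tree for $\DE(P_n;P_k)$ by always pivoting on the smallest still-available vertex, and then to read off the number and cardinality of unmatched elements directly from the terminal leaf. The key observation I intend to establish is that with this choice of pivot $\DE^{\mathcal{B}}_{up}(p)$ is always either a singleton or empty, so no splitting site ever arises and the matching tree is literally a chain.

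First, at the root $\Sigma(\emptyset,\{\emptyset\})$ I pivot on $p=1$. For a face $I$, the set $I\cup\{1\}$ acquires $k$ consecutive integers precisely when $\{2,\ldots,k\}\subseteq I$, so $\DE^{\mathcal{B}}_{up}(1)=\{\{2,\ldots,k\}\}$ and the unique child is $\Sigma(A_1,\mathcal{B}_1)$ with $A_1=\{2,\ldots,k\}$ and $\mathcal{B}_1=\{\{1\},\{k+1\}\}$ (the second singleton being omitted if $k+1>n$). I would then argue by induction on the round number $j$ that after $j$ rounds one reaches a node $\Sigma(A_j,\mathcal{B}_j)$ with
\[
A_j=\bigcup_{i=0}^{j-1}\{i(k+1)+2,\ldots,i(k+1)+k\},\qquad |A_j|=j(k-1),
\]
and with $\mathcal{B}_j$ consisting of the singletons $\{i(k+1)+1\}$ and $\{(i+1)(k+1)\}$ for $0\le i\le j-1$, truncated to $[1,n]$; in particular $A_j\cup V(\mathcal{B}_j^{\ast})=\{1,\ldots,\min(n,j(k+1))\}$. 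The inductive step pivots on $p=j(k+1)+1$: its left neighbour $j(k+1)$ lies in $\mathcal{B}_j^{\ast}$ (for $j\ge 1$) or simply does not exist as a vertex (for $j=0$), so no $k$-run through $p$ can extend leftward, and the only possible new obstruction is the $(k-1)$-block $\{j(k+1)+2,\ldots,j(k+1)+k\}$, which gives a singleton $\DE^{\mathcal{B}}_{up}(p)$ exactly when the whole block fits inside $[1,n]$.

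Writing $n=(k+1)t+d$ with $0\le d\le k$, I would conclude by case analysis. If $d=0$, then $A_t\cup V(\mathcal{B}_t^{\ast})=V$, so $\Sigma(A_t,\mathcal{B}_t)$ is a leaf whose unique face $A_t$ is unmatched of cardinality $t(k-1)=tk-t$, giving case (1). If $d=k$, one more round with pivot $p=t(k+1)+1$ succeeds because the $(k-1)$-block still fits in $V$, landing at a leaf whose unmatched face $A_{t+1}$ has cardinality $(t+1)(k-1)=tk-t+k-1$, giving case (2); the would-be right obstruction $\{(t+1)(k+1)\}$ is discarded since it exceeds $n$. If $1\le d\le k-1$, the pivot $p=t(k+1)+1$ has fewer than $k-1$ vertices to its right, so $\DE^{\mathcal{B}}_{up}(p)=\emptyset$; the matching site then matches all of $\Sigma(A_t,\mathcal{B}_t)$ and yields an empty child, so no unmatched face survives, giving case (3).

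The main obstacle I anticipate is the uniform bookkeeping at each pivot: I need to verify that $\DE^{\mathcal{B}}_{up}$ really stays a singleton throughout all $t$ rounds (so that the tree indeed never splits) and to correctly handle the boundary effects in which a prospective right obstruction $\{(j+1)(k+1)\}$ falls outside $V$. Once these points are confirmed, the three cases fall out directly from the residue of $n$ modulo $k+1$.
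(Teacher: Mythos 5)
Your proof is correct and takes essentially the same approach as the paper: always pivot on the smallest unconsumed vertex so the matching tree is a single chain, and read the unmatched faces off the terminal leaf. The paper phrases the bookkeeping as a strong induction on $n$, observing that after the first pivot the residual subtree is the matching tree of $\DE(P_{n-(k+1)};P_k)$ with $\{2,\dots,k\}$ adjoined, whereas you unroll that recursion into explicit closed forms for $A_j$ and $\mathcal{B}_j$ and case-split on $n \bmod (k+1)$ at the end, but the underlying idea and the pivot choices are identical.
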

\begin{proof}
We choose the vertex $1$ as our first pivot. Then the family $\overline{\DE}(1)$ has only one element, namely $\{2,3,4,\dots,k\}$. 
If $n=k=(k+1)t+k=(k+1)0+k$, then $\overline{\DE}(\{2,3,4,\dots,k\})=\{\{1\}\}$. Note that $\Sigma(\emptyset,\{\emptyset\})$ has a unique child 
$\Sigma(\{2,3,4,\dots,k\},\{\{1\}\})=\{\{2,3,4,\dots,k\}\}$. Therefore it has a unique unmatched element of cardinality $k-1=k0-0+k-1$. 
This verifies the induction base. Now consider the case where $n>k$. Then $\overline{\DE}(\{2,3,4,\dots,k\})=\{\{1\},\{k+1\}\}$ so that 
$\Sigma(\emptyset,\{\emptyset\})$ has a unique child, namely $\Sigma(\{2,3,4,\dots,k\},\{\{1\},\{k+1\}\})$. The graph 
obtained by deleting $\{2,3,4,\dots,k\}\cup \{1,k+1\}$ is $P_{n-(k+1)}$; hence, the unmatched elements of $\DE(P_n;P_k)$ 
can be obtained by adding the vertices ${2,3,\dots,k}$ to the unmatched elements of $\DE(P_{n-(k+1)};P_k)$. Suppose 
that $n=(k+1)t$. Then $n-(k+1)=(k+1)t-k-1=(k+1)(t-1)$. By the induction hypothesis, we conclude that $\DE(P_{n-(k+1)};P_k)$ 
has a unique unmatched element of cardinality $(t-1)k-(t-1)$. Therefore, $\DE(P_n;P_k)$ has a unique unmatched element of cardinality  $(t-1)k-(t-1)+k-1=tk-t$. 
Assume now that $n=(k+1)t+k$. Then $n-(k+1)=(k+1)t+k-k-1=(k+1)t-1=(k+1)(t-1)+k$. By induction hypothesis $\DE(P_{n-(k+1)};P_k)$ 
has a unique unmatched element of cardinality $(t-1)k+k-(t-1)-1=kt-t$. So, the complex $\DE(P_n;P_k)$ has a unique unmatched element 
of cardinality  $kt-t+k-1$. Finally, suppose that $n=(k+1)t+d$, where $1\leq d\leq k-1$, which implies that $n-(k+1)=(k+1)t+d-(k+1)=(k+1)t+d-k-1$. However,
we then have that the latter is equivalent to $(k+1)t+d$ by $\bmod({k+1})$. More precisely, $(k+1)t+d-k-1\equiv (k+1)t+d-k-1+k+1\equiv (k+1)t+d\pmod{k+1}$. 
By the induction hypothesis, $\DE(P_{n-(k+1)};P_k)$ has no unmatched element. Thus, $\DE(P_n;P_k)$ has no unmatched element. This completes the proof.
\end{proof}

Suppose that the vertex set of $C_n$ is $\{1,2,\dots,n\}$ such that $i$ and $j$ form an edge if and only 
if $|i-j|=1 \pmod {n}$ where $i,j\in \{1,2,\dots,n\}$. 
In order to simplify the notation, we write $I_i^j=[i,j]\subseteq [1,n]=\{1,2,\dots,n\}$ for the set $\{i,i+1,\dots,j\}$.

\begin{thm}\label{thm1} For the devoid complex $\DE(C_n;P_k)$, the following homotopy equivalence holds:
\begin{equation*}
\DE(C_n;P_k)\simeq
\begin{cases}
\bigvee^k S^{t(k-1)-1}, & \text{if $n=(k+1)t$}\\
S^{t(k-1)-1}, & \text{if $n=(k+1)t+1$}\\
S^{t(k-1)+d-2}, & \text{if $n=(k+1)t+d$}\\
S^{t(k-1)+k-2}, & \text{if $n=(k+1)t+k$}\\
\end{cases},
\end{equation*}
where $2\leq d\leq k-1$.
\end{thm}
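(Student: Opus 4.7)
My plan is to construct an explicit matching tree for $\DE(C_n;P_k)$ that exploits the cyclic structure, then reduce the count of unmatched faces to the path case already resolved by the preceding lemma. Label the vertices of $C_n$ cyclically as $1, 2, \ldots, n$ and pick vertex $1$ as the root pivot. The family $\overline{\DE}(1)$ consists of exactly $k$ minimal elements $A_a := \{n-a+1, \ldots, n\} \cup \{2, \ldots, k-a\}$ for $a = 0, 1, \ldots, k-1$, encoding the $k$ possible positions of vertex $1$ inside a $P_k$ of $C_n$. Since $|\overline{\DE}(1)| = k \ge 2$, the root is a splitting site; a sequence of $k-1$ splittings on the $A_a$ together with a final matching site on vertex $1$ produces $k$ inclusion branches $B_0, \ldots, B_{k-1}$, where $B_a$ forces the $k-1$ vertices of $A_a$ into every face and forbids the vertices listed in $\overline{\DE}^{\mathcal{B}}(A_a)$.

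Inside each $B_a$, the matching tree continues for a few more steps to handle the $P_k$'s that straddle $A_a$ and the remaining free arc. After these continuations, $B_a$ is equivalent to a devoid complex of a path on either $n - k - 1$ or $n - k - 2$ vertices, where the length depends on how the accumulated $\mathcal{B}$-constraints shorten the free region for that particular $a$. The preceding lemma then determines the number and cardinality of unmatched faces contributed by $B_a$. A case analysis on $n \bmod (k+1)$ collects the contributions: when $n = (k+1)t$ every branch contributes exactly one unmatched face of cardinality $t(k-1)$, producing the wedge of $k$ spheres of dimension $t(k-1) - 1$; when $n = (k+1)t + 1$ only a single branch contributes one face of cardinality $t(k-1)$, giving $S^{t(k-1) - 1}$; when $n = (k+1)t + d$ with $2 \le d \le k-1$ exactly one branch contributes a face of cardinality $t(k-1) + d - 1$, giving $S^{t(k-1) + d - 2}$; and when $n = (k+1)t + k$ one branch contributes a face of cardinality $t(k-1) + k - 1$, giving $S^{t(k-1) + k - 2}$. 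Since all other branches have empty unmatched sets, the union of the branch matchings forms a Morse matching on $\DE(C_n;P_k)$ with exactly the stated critical cells, and the homotopy equivalence follows from the corollary of Forman's theorem recorded in Section~\ref{pre}.

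The main obstacle is the careful bookkeeping of how the $\mathcal{B}$-constraints accumulated through the sequence of splittings reshape each $B_a$, especially verifying that for $n = (k+1)t$ all $k$ branches contribute unmatched faces of the \emph{same} dimension, so that the result is a wedge of $k$ spheres rather than a single sphere. For every other residue class of $n \bmod (k+1)$, one must also identify which of the $k$ branches yields the single surviving unmatched face and check that its cardinality matches the stated formula. This reduces to analyzing the specific form of $A_a \setminus A_{a'}$, which governs which vertex near the endpoints of the free arc becomes forbidden in each branch, and then invoking the preceding lemma on the appropriate $\DE(P_m;P_k)$.
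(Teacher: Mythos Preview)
Your overall strategy coincides with the paper's: build a matching tree for $\DE(C_n;P_k)$, fan out into $k$ branches anchored at vertex~$1$, hand off what you can to the path lemma, and finish with a case analysis on $n\bmod(k+1)$. The paper's initial branching differs only cosmetically---it takes pivots $1,2,\ldots,k-1$ in turn and splits on the nested intervals $I_{j+1}^k=[j+1,k]$, producing branches $B_1,\ldots,B_{k-1}$ together with a residual leftmost branch $A_{k-1}=\Sigma(\emptyset,\{\{k\}\})\cong\DE(P_{n-1};P_k)$, rather than your $k$ rotations $A_a$---but in both decompositions exactly two of the $k$ branches separate cleanly into a path devoid complex once the forced and forbidden vertices are removed (in your scheme these are $B_0$ and $B_{k-1}$; in the paper's they are $B_1$ and $A_{k-1}$).

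The gap is in the remaining $k-2$ branches. Your assertion that every $B_a$ becomes, after ``a few more steps,'' a devoid complex of a path on $n-k-1$ or $n-k-2$ vertices is unjustified, and for $1\le a\le k-2$ it does not hold in any direct sense: the arc $\{n-a+1,\ldots,n\}\subseteq A_a$ stays adjacent to the free region $\{k-a+2,\ldots,n-a\}$, so the $P_k$-freeness constraint couples the free part to $A_a$ through the extra forbidden block $\{n-k+1,\ldots,n-a\}$, and the node is \emph{not} isomorphic to the root of a matching tree on a shorter path. The paper faces the identical difficulty with its middle branches $B_2,\ldots,B_{k-1}$ and does not resolve it by reducing to the path lemma; instead it continues the matching tree inside each $B_{k-l}$ with an explicit pivot sequence $k{-}l{-}1,\ n{-}l{-}2,\ n{-}l{-}k{-}3,\ldots,\ 2k{-}l{+}d$, records the unique child at every step, and then performs a three-way sub-case analysis ($d\le l$, $d=l+1$, $l+2\le d$) to decide whether a critical cell survives and compute its cardinality. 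That computation is the bulk of the proof and is exactly the ``bookkeeping'' you flag but do not carry out; note in particular that for $n=(k+1)t+d$ with $2\le d\le k-1$ the single critical cell comes from one of these middle branches (the one with $l=d-1$), so the two clean path reductions alone cannot account for that case.
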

\begin{proof}
We again choose the vertex $1$ as our first pivot. 
Then $I(\emptyset,\{\emptyset\})=\DE(C_n;P_k)$ has two children, since the family $\overline{\DE}(1)$ has more than one element. 
If we choose the element $I_2^k=[2,k]$ from the family $\overline{\DE}(1)$, the children are
\begin{align*}
A_1=\Sigma(\emptyset,\{I_2^k\})
\end{align*}
and
\begin{align*}
B_1=\Sigma(I_2^k,\{\{1\},\{k+1\}\}).
\end{align*}
Now we can choose $2$ as a pivot element for $A_1$. There exists at least two elements in $\overline{\DE}(2)$ which does not 
contain $I_2^k$ for $A_1$, and we choose $I_3^k$ from that family. Then the children of $A_1$ are given by
\begin{align*}
A_2=\Sigma(\emptyset,\{I_2^k,I_3^k\})=\Sigma(\emptyset,\{I_3^k\})
\end{align*}
and
\begin{align*}
B_2=\Sigma(I_3^k,\{I_2^k,\{2\},I_{k+1}^{k+2}\})=\Sigma(I_3^k,\{\{2\},I_{k+1}^{k+2}\}).
\end{align*}
Similarly, choose now $3$ as a pivot for $A_2$, and consider the element $I_4^k$ from the family $\overline{\DE}(3)$ for $A_2$. The children of  $A_2$ are
\begin{align*}
A_3=\Sigma(\emptyset,\{I_2^k,I_3^k,I_4^k\})=\Sigma(\emptyset,\{I_4^k\})
\end{align*}
and
\begin{align*}
B_3=\Sigma(I_4^k,\{I_2^k,I_3^k,\{3\},I_{k+1}^{k+3}\})=\Sigma(I_4^k,\{\{3\},I_{k+1}^{k+3}\}).
\end{align*}
We continue in this manner by branching each new $A_i$ using the vertex $i+1$ as a pivot and denoting the children of $A_i$ by $A_{i+1}$ and $B_{i+1}$. 
We stop this procedure when we reach the pivot $k-1$. We write the last two branchings in details. 
When we have that $k-2$ is our pivot, $A_{k-3}$ has two children, namely
\begin{align*}
A_{k-2}=\Sigma(\emptyset,\{I_2^k,I_3^k,I_4^k,\dots,I_{k-1}^k\})=\Sigma(\emptyset,\{I_{k-1}^k\})
\end{align*}
and
\begin{align*}
B_{k-2}=\Sigma(I_{k-1}^k,\{I_2^k,I_3^k,\dots,I_{k-2}^k,\{k-2\},I_{k+1}^{2k-2}\})=\Sigma(I_{k-1}^k,\{\{k-2\},I_{k+1}^{2k-2}\}).
\end{align*}
We take $k-1$ as a pivot for $A_{k-2}$ so that the children of $A_{k-2}$ are 
\begin{align*}
A_{k-1}=\Sigma(\emptyset,\{I_2^k,I_3^k,I_4^k,\dots,I_{k-1}^k,\{k\}\})=\Sigma(\emptyset,\{\{k\}\})
\end{align*}
and
\begin{align*}
B_{k-1}=\Sigma(\{k\},\{I_2^k,I_3^k,\dots,I_{k-1}^k,\{k-1\},I_{k+1}^{2k-1}\})=\Sigma(\{k\},\{\{k-1\},I_{k+1}^{2k-1}\}).
\end{align*}
Now note that $A_{k-1}=\Sigma(\emptyset,\{\{k\}\})=\DE(P_{n-1};P_k)$. Furthermore, the graph obtained by deleting $I_2^k\cup \{1,k+1\}$ is $P_{n-(k+1)}$. 
Thus the elements of $B_1$ can be obtained by adding ${2,3,\dots,k}$ to the unmatched elements of $\DE(P_{n-(k+1)};P_k)$. So, 
we only need to examine the sets $B_i$ for all $2\leq i\leq k-1$. 

A typical element of these sets can be written of the form
\begin{align*}
B_{k-l}=\Sigma(I_{k-l+1}^k,\{\{k-l\},I_{k+1}^{2k-l}\}).
\end{align*}
Note that $B_1$ corresponds to the case where $k-l=1$ so that we assume that $k-l\neq 1$, and by using appropriate pivots, we next describe the children of $B_{k-l}$.

We choose $\{k-l-1\}$ as a pivot. Since no element in $B_{k-l}$ contains $\{k-l\}$, it follows that $\overline{\DE}(k-l-1)$ has only one element, namely $I_{n-l}^{k-l-2}$. 
At this point, we should also note that $|I_{n-l}^{k-l-2}|=k-1$. Thus there exists a unique child of  $B_{k-l}$, and its child is $C_{k-l-1}=\Sigma(A,\mathcal{B})$, where
\begin{align*}
A=I_{k-l+1}^k\cup I_{n-l}^{k-l-2}
\end{align*}
and
\begin{align*}
\mathcal{B}=\{\{k-l\},I_{k+1}^{2k-l},\{k-l-1\},\{n-l-1\}\}.
\end{align*}

We now consider $\{n-l-2\}$ as a pivot. Since no element in  $C_{k-l-1}$ contains $\{n-l-1\}$, there exists a unique child $C_{n-l-2}=\Sigma(A,\mathcal{B})$ of $C_{k-l-1}$, where
\begin{align*}
A=I_{k-l+1}^k\cup I_{n-l}^{k-l-2}\cup I_{n-l-k-1}^{n-l-3},
\end{align*}
\begin{align*}
\mathcal{B}=\{\{k-l\},I_{k+1}^{2k-l},\{k-l-1\},\{n-l-1\},\{n-l-2\},\{n-l-k-2\}\}. 
\end{align*}

If we choose $n-l-k-3$ as the pivot, $C_{n-l-2}$ has a unique child $C_{n-l-k-3}=\Sigma(A,\mathcal{B})$, where
\begin{align*}
A&=I_{k-l+1}^k\cup I_{n-l}^{k-l-2}\cup I_{n-l-k-1}^{n-l-3}\cup I_{n-2k-l-2}^{n-l-k-4} \quad \textrm{and}\\
\mathcal{B}=\{\{k-l\},&I_{k+1}^{2k-l},\{k-l-1\},\{n-l-1\},\{n-l-2\},\\
&\{n-l-k-2\},\{n-l-k-3\},\{n-2k-l-3\}\}. 
\end{align*}
We continue in the same fashion until we reach the element $3k-l+d+1$ as our pivot in which case the unique child is $C_{3k-l+d+1}=\Sigma(A,\mathcal{B})$, where
\begin{align*}
A&=I_{k-l+1}^k\cup I_{n-l}^{k-l-2}\cup I_{n-l-k-1}^{n-l-3}\cup I_{n-2k-l-2}^{n-l-k-4}\cup \dots\cup I_{2k-l+d+2}^{3k-l+d} \quad \textrm{and}\\
\mathcal{B}=\{\{k-l\},&I_{k+1}^{2k-l},\{k-l-1\},\{n-l-1\},\{n-l-2\},\{n-l-k-2\},\\
&\{n-l-k-3\},\{n-2k-l-3\},\dots,\{3k-l+d+1\},\{2k-l+d+1\}\}.
\end{align*}
Now assume that $n=(k+1)t+d$, where $d\geq 0$. We can choose $2k-l+d$ as a pivot so that the unique child is $C_{2k-l+d}=\Sigma(A,\mathcal{B})$, where
\begin{align*}
A&=I_{k-l+1}^k\cup I_{n-l}^{k-l-2}\cup I_{n-l-k-1}^{n-l-3}\cup I_{n-2k-l-2}^{n-l-k-4}\cup\dots\cup I_{2k-l+d+2}^{3k-l+d}\cup I_{k-l+d+1}^{2k-l+d-1},\quad \textrm{and}\\ 
\mathcal{B}=\{&\{k-l\},I_{k+1}^{2k-l},\{k-l-1\},\{n-l-1\},\{n-l-2\}, \{n-l-k-2\},\{n-l-k-3\},\\
&\{n-2k-l-3\},\dots,\{3k-l+d+1\},\{2k-l+d+1\},\{2k-l+d\},\{k-l+d\}\}. 
\end{align*}

We will examine this case in detail:

{\bf Case.1:} Suppose that $d=0$. Then the node above is $C_{2k-l}=\Sigma(A,\mathcal{B})$, where
\begin{align*}
A&=I_{k-l+1}^k\cup I_{n-l}^{k-l-2}\cup I_{n-l-k-1}^{n-l-3}\cup I_{n-2k-l-2}^{n-l-k-4}\cup\dots\cup I_{2k-l+2}^{3k-l}\cup I_{k-l+1}^{2k-l-1}\quad \textrm{and}\\ 
\mathcal{B}=\{&\{k-l\},I_{k+1}^{2k-l},\{k-l-1\},\{n-l-1\},\{n-l-2\},\{n-l-k-2\},\\
&\{n-l-k-3\},\{n-2k-l-3\},\dots,\{3k-l+1\},\{2k-l+1\},\{2k-l\}\}. 
\end{align*}

Note that there is no more vertices that we can use as a pivot, since $A\cup V(\mathcal{B}^*)=V(C_n)$. Moreover, we have 
$I_{k-l+1}^k\subset I_{k-l+1}^{2k-l-1}$ so that $I_{n-l}^{k-l-2}\uplus I_{n-l-k-1}^{n-l-3}\uplus I_{n-2k-l-2}^{n-l-k-4}\uplus\dots\uplus I_{2k-l+2}^{3k-l}\uplus I_{k-l+1}^{2k-l-1}$ 
is the unique unmatched element of cardinality $t(k-1)$.

{\bf Case.2:} Assume that $1\leq d\leq k-1$.
\begin{itemize}
\item Suppose first that $d\leq l$. We return back to the pivot $2k-l+d$. If there is an element in $\overline{\DE}(2k-l+d)$, 
it must be of the form $[k-l+d+1,2k-l+d-1]$, since no set in $C_{3k-l+d+1}$ contains $\{2k-l+d+1\}$. Observe that $k-l+d+1\leq k+1$ due to the assumption that $d\leq l$. 
Therefore the set $[k-l+d+1,2k-l+d-1]$ must contain the set $[k+1,2k-l]$ which is impossible, since no set in $C_{3k-l+d+1}$ contains $[k+1,2k-l]$. 
Thus $C_{3k-l+d+1}$ has no children implying that there is no unmatched element.

\item Assume that $d=l+1$. We then have $C_{2k-l+d}=C_{2k+1}=\Sigma(A,\mathcal{B})$, where
\begin{align*}
A&=I_{k-l+1}^k\cup I_{n-l}^{k-l-2}\cup I_{n-l-k-1}^{n-l-3}\cup I_{n-2k-l-2}^{n-l-k-4}\cup\dots\cup I_{2k+3}^{3k+1}\cup I_{k+2}^{2k}\quad \textrm{and}\\ 
\mathcal{B}&=\{\{k-l\},I_{k+1}^{2k-l},\{k-l-1\},\{n-l-1\},\{n-l-2\},\{n-l-k-2\},\\
&\{n-l-k-3\},\{n-2k-l-3\},\dots,\{3k+2\},\{2k+2\},\{2k+1\},\{k+1\}\}. 
\end{align*}
Therefore, $I_{k-l+1}^k\uplus I_{n-l}^{k-l-2}\uplus I_{n-l-k-1}^{n-l-3}\uplus I_{n-2k-l-2}^{n-l-k-4}\uplus\dots\uplus I_{2k+3}^{3k+1}\uplus I_{k+2}^{2k}$
is the unique unmatched element of cardinality $t(k-1)+l=t(k-1)+d-1$.

\item Now suppose that $l+2\leq d\leq k$ and choose $k-l+d-1$ as a pivot. 
Any set in $\overline{\DE}(k-l+d-1)$ must be of the form $I_{d-l}^{k-l+d-2}$. Since $d-l\leq k-l$, this set must contain the vertex $k-l$. However 
this is impossible, since no set in $C_{2k-l+d}$ contain $k-l$. Thus there exists no unmatched element.
\end{itemize}

We next summarize the results we have so far in order to complete the proof:

\begin{enumerate}
\item $n=(k+1)t$. We recall that
\begin{align*}
A_{k-1}=\Sigma(\emptyset,\{\{k\}\})=\DE(P_{n-1};P_k).
\end{align*}
Since $n-1=(k+1)t-1=(k+1)(t-1)+k$, we conclude that $A_{k-1}$ has a unique unmatched element of cardinality $t(k-1)$. 
Also recall that the elements of $B_1$ can be obtained by adding ${2,3,\dots,k}$ to the unmatched elements 
of $\DE(P_{n-(k+1)};P_k)$. Since $n-(k+1)=(k+1)t-k-1=(k+1)(t-1)$, it follows that  $B_1$ has a unique unmatched 
element of cardinality $(t-1)k-(t-1)+k-1=t(k-1)$. For each $B_i$, there is a unique unmatched element of 
cardinality $t(k-1)$, where $2\leq i\leq k-1$. We therefore have $k$ unmatched elements of cardinality $t(k-1)$ in this case.
\item $n=(k+1)t+1$. Only $A_{k-1}$ has an unmatched element and its cardinality is $t(k-1)$.
\item $n=(k+1)t+d$ with $2\leq d\leq k-1$. The only unmatched element comes from a $B_i$ in the case $d=l+1$, and its cardinality is $t(k-1)+d-1$.
\item $n=(k+1)t+k$. Then there exists a unique unmatched element coming from $B_1$ of cardinality $t(k-1)+k-1$.
\end{enumerate}
\end{proof}

Our next task is to compute the homotopy types of dominance complexes of chordal graphs as we promised. 

\begin{lem} Let $G$ be a (simple) graph with a vertex $u$ such that the set $N[u]$ induces a clique in $G$. 
Then $\ve(G')=\ve(G)-d_G(u)$, where $G'=G-N[u]$.
\end{lem}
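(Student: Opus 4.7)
The plan is to prove the equality by establishing the two inequalities $\ve(G)\leq \ve(G')+d_G(u)$ and $\ve(G)\geq \ve(G')+d_G(u)$ separately. Set $d=d_G(u)$ and $N(u)=\{v_1,\dots,v_d\}$, so $|N[u]|=d+1$ and $G[N[u]]\cong K_{d+1}$.

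For the upper bound, I would start with a minimum vertex cover $C'$ of $G'$ and show that $C'\cup N(u)$ is a vertex cover of $G$. Every edge of $G$ falls into one of three categories: (i) both endpoints lie outside $N[u]$, in which case the edge belongs to $G'$ and is covered by $C'$; (ii) the edge lies entirely inside $N[u]$, in which case it is either of the form $\{u,v_i\}$ (covered by $v_i\in N(u)$) or $\{v_i,v_j\}$ (covered by either $v_i$ or $v_j$ in $N(u)$); (iii) exactly one endpoint lies in $N[u]$, and that endpoint cannot be $u$ (since all neighbours of $u$ are in $N(u)\subset N[u]$), so it must be some $v_i\in N(u)$, and the edge is covered. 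Hence $|C'\cup N(u)|\leq \ve(G')+d$ is a vertex cover of $G$, giving $\ve(G)\leq \ve(G')+d$.

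For the lower bound, I take any minimum vertex cover $C$ of $G$ and argue first that $|C\cap N[u]|\geq d$. This is the key observation: since $G[N[u]]$ is a clique on $d+1$ vertices, the complement of $C\cap N[u]$ inside $N[u]$ must be an independent set in this clique, hence has at most one element, forcing $|C\cap N[u]|\geq d$. Next I note that $C\setminus N[u]$ is automatically a vertex cover of $G'=G-N[u]$: every edge of $G'$ has both endpoints outside $N[u]$ and is covered by $C$, so one of its endpoints lies in $C\setminus N[u]$. Therefore
\[
\ve(G')\leq |C\setminus N[u]|=|C|-|C\cap N[u]|\leq \ve(G)-d,
\]
which rearranges to $\ve(G)\geq \ve(G')+d$.

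Combining the two inequalities yields the claimed equality $\ve(G')=\ve(G)-d_G(u)$. I do not expect any serious obstacle here; the only subtlety is the clique argument showing that a minimum cover must contain all but at most one vertex of $N[u]$, and verifying in case (iii) of the upper bound that no edge of $G$ joins $u$ directly to a vertex outside $N[u]$, which is immediate from the definition of $N[u]$.
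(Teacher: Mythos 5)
Your proof is correct and follows essentially the same two-inequality approach as the paper: use the clique structure of $G[N[u]]$ to argue that any vertex cover of $G$ contains at least $d_G(u)$ vertices of $N[u]$ (yielding the lower bound), and observe that adding $N(u)$ to a cover of $G'$ covers $G$ (yielding the upper bound). The only cosmetic difference is that the paper phrases the first step as ``$C$ contains a set $A$ of exactly $d_G(u)$ elements from $N[u]$'' and removes $A$, whereas you remove all of $N[u]$ and only need the inequality $|C\cap N[u]|\ge d_G(u)$; this is a harmless simplification and your clique-complement justification for it is exactly the right argument.
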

\begin{proof} 
Assume that $C$ is a minimum covering set for $G$. Then $C$ must contain a set $A$ having exactly $d_G(u)$ elements from $N[u]$, 
since otherwise there would be some edges in $G[N[u]]$ that is not covered. It follows that $C\setminus A$ covers $G'$. 
We therefore have $\ve(G')\leq \ve(G)-d_G(u)$. Now, let $C'$ be a minimum covering set for $G'$. Clearly $C'\cup N(u)$ 
is a covering set for $G$ so that $\ve(G)\leq \ve(G')+d_G(u)$. 
\end{proof}
\begin{thm}Let $G$ be a chordal graph. Then the dominance complex $\Dom(G)$ of $G$ is homotopy equivalent to a sphere of dimension $\ve(G)-1$.
\end{thm}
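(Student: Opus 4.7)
The plan is induction on $|V(G)|$. For the base case, if $G$ has no edges, every vertex is isolated and forces itself into any dominating set, so $\Dom(G) = \{\emptyset\} \simeq S^{-1}$, matching $\ve(G) = 0$. For the inductive step, Dirac's theorem provides a simplicial vertex, and restricting attention to a component containing an edge we obtain a simplicial vertex $u$ with $d_G(u) \geq 1$; fix $v \in N_G(u)$. Since $G - v$ is again chordal with one fewer vertex, and $\ve(G - v) = \ve(G) - 1$ (derivable by applying the preceding lemma once to $G$ with the clique $N_G[u]$ and once to $G - v$ with the smaller clique $N_{G-v}[u]$), it will suffice to prove the one-step reduction
\begin{equation*}
\Dom(G) \simeq \Sigma \Dom(G - v),
\end{equation*}
whereupon the induction hypothesis and a suspension give $\Dom(G) \simeq S^{\ve(G) - 1}$.

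This reduction is produced by Theorem~\ref{T1} applied at $v$, provided (i) $\dl_{\Dom(G)}(v)$ is contractible and (ii) $\lk_{\Dom(G)}(v) = \Dom(G - v)$. For (i), I will verify that $\dl_{\Dom(G)}(v)$ is a cone with apex $u$. Fix a face $\sigma \in \Dom(G)$ with $v \notin \sigma$; the goal is $\sigma \cup \{u\} \in \Dom(G)$, i.e.\ no $N_G[y] \subseteq \sigma \cup \{u\}$. Assuming the contrary, since $\sigma$ is already a face we must have $u \in N_G[y]$, so $y \in N_G[u]$. When $y = u$ this forces $N_G(u) \subseteq \sigma$, contradicting $v \notin \sigma$; when $y \in N_G(u)$, the fact that $N_G[u]$ is a clique yields $N_G[u] \subseteq N_G[y]$, hence $v \in N_G[y] \subseteq \sigma \cup \{u\}$ and therefore $v \in \sigma$, another contradiction.

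For (ii), unpacking the definition shows $\lk_{\Dom(G)}(v) = \{\sigma \in \Dom(G - v) : N_G(v) \not\subseteq \sigma\}$, since $\sigma \cup \{v\} \in \Dom(G)$ encodes exactly that $\sigma$ dominates $G - v$ and that $v$ itself is dominated externally. The additional condition is automatic: the inclusion $N_{G-v}[u] = \{u\} \cup (N_G(u) \setminus \{v\}) \subseteq N_G(v)$ (once more from the clique structure of $N_G[u]$) implies that any $\sigma$ with $N_G(v) \subseteq \sigma$ would contain the non-face $N_{G-v}[u]$ of $\Dom(G - v)$, contradicting $\sigma \in \Dom(G - v)$. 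Hence $\lk_{\Dom(G)}(v) = \Dom(G - v)$, and Theorem~\ref{T1} gives $\Dom(G) \simeq \dl_{\Dom(G)}(v) \vee \Sigma \lk_{\Dom(G)}(v) \simeq \Sigma \Dom(G - v)$.

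The main technical obstacle is the uniform case analysis for the cone property: one must handle every $y \in N_G[u]$ and exploit that the clique structure forces $v$ into any hypothetical $N_G[y] \subseteq \sigma \cup \{u\}$. An alternative and perhaps cleaner organization is to iterate the one-step reduction, peeling off $v_1, \dots, v_{d_G(u)} \in N_G(u)$ in turn to obtain $\Dom(G) \simeq \Sigma^{d_G(u)} \Dom(G - N_G[u])$; this lets one invoke the preceding lemma $\ve(G - N_G[u]) = \ve(G) - d_G(u)$ directly and sidesteps the separate derivation of $\ve(G - v) = \ve(G) - 1$.
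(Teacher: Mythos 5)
Your proof is correct, but it follows a genuinely different route from the paper. The paper places this result in Section~5 precisely as an illustration of the \emph{matching tree} machinery: it takes the simplicial vertex $u$ itself as the pivot, shows that $\overline{\DE}(u)$ consists of the single set $N(u)$, reduces to the unmatched elements of $\Dom(G - N[u])$, and concludes by exhibiting a unique critical cell of cardinality $\ve(G)$. You instead stay entirely inside the elementary homotopy toolkit of Section~3: rather than pivot at $u$, you split at a \emph{neighbor} $v\in N(u)$ via Theorem~\ref{T1}, verifying that $\dl_{\Dom(G)}(v)$ is a cone with apex $u$ (here the simplicial property enters through the inclusion $N_G[u]\subseteq N_G[y]$ for any $y\in N_G(u)$) and that $\lk_{\Dom(G)}(v)=\Dom(G-v)$ (where the constraint $N_G(v)\not\subseteq\sigma$ is absorbed because $N_{G-v}[u]\subseteq N_G(v)$ is already a non-face of $\Dom(G-v)$). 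This yields the one-step suspension $\Dom(G)\simeq\Sigma\,\Dom(G-v)$ and the bookkeeping $\ve(G-v)=\ve(G)-1$, which you correctly derive from the paper's lemma applied to both $G$ and $G-v$. Your alternative iterated form $\Dom(G)\simeq\Sigma^{d_G(u)}\Dom(G-N[u])$ aligns even more closely with the paper's reduction target while still avoiding discrete Morse theory altogether. The paper's proof buys a uniform demonstration that the matching tree formalism handles $\Dom$ as smoothly as $\IE$; yours buys a shorter, self-contained argument using only cone/link/deletion homotopy, and arguably makes the role of the simplicial vertex more transparent.
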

\begin{proof} We show by an induction that there exists a unique unmatched element in $\Dom(G)$ of cardinality $\ve(G)$. 
Since $G$ is a chordal graph, there exists a vertex $u$ such that $N[u]$ is a clique. We choose $u$ as a pivot. Then the 
family $\overline{\DE}(u)$ has only one element,  namely $N(u)$. Therefore, $\Sigma(\emptyset,\{\emptyset\})$ has a 
unique child $\Sigma(A,\mathcal{B})$, where $A=N(u)$ and $\mathcal{B}$ consists of the sets of minimal elements of 
$$\mathcal{B}'=\{N[x]\cap (V\setminus N[u])\colon x\;\textrm{is adjacent to one of the vertices in}\;N(u)\}\cup\{\{u\}\}.$$ 
Now, if we write $G'=G-N[u]$, then we note that the elements in $\Sigma(A,\mathcal{B})$ can be obtained 
by adding $N(u)$ to the unmatched elements of $\Dom(G')$. So, $\Dom(G')$ has a unique unmatched element of dimension 
$\ve(G')$ by the induction; hence, the complex $\Dom(G)$ has a unique unmatched element of dimension 
$\ve(G')+d_G(u)=\ve(G)-d_G(u)+d_G(u)=\ve(G)$. 
\end{proof}
\begin{lem}\label{T7}
Let $T$ be a tree with a saddle vertex $x$. If the vertex $a$ is the unique non-leaf neighbour of $x$, 
then $\m(T)=\m(T^*)+1$, where $T^*=T\setminus (N[x]\setminus \{a\})$.
\end{lem}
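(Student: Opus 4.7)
The plan is to prove both inequalities $\m(T)\geq \m(T^*)+1$ and $\m(T)\leq \m(T^*)+1$ by direct construction.

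First I would record a structural observation that drives everything. Since $a$ is the \emph{unique non-leaf neighbour} of $x$, the saddle vertex $x$ has degree $1$ (not $0$) in the graph $H$ obtained from $T$ by removing all leaves, so in particular $x$ is itself not a leaf of $T$. Hence $d_T(x)\geq 2$, and all neighbours of $x$ other than $a$ are leaves of $T$. In particular, the set $N[x]\setminus\{a\}=\{x,\ell_1,\ldots,\ell_r\}$ with $r\geq 1$, where each $\ell_i$ is a leaf of $T$ whose only neighbour is $x$.

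For the lower bound, let $M^*$ be a maximum matching of $T^*=T\setminus(N[x]\setminus\{a\})$. Since $x$ and all $\ell_i$ are absent from $T^*$, the edge $\{x,\ell_1\}$ is disjoint from every edge of $M^*$, so $M^*\cup\{\{x,\ell_1\}\}$ is a matching of $T$. This yields $\m(T)\geq \m(T^*)+1$.

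For the upper bound, let $M$ be a maximum matching of $T$. If $x$ is unmatched in $M$, then every leaf neighbour $\ell_i$ of $x$ is also unmatched (its only neighbour being $x$), so no edge of $M$ is incident to any vertex of $N[x]\setminus\{a\}$, and $M$ is already a matching of $T^*$, giving $|M|\leq \m(T^*)$. Otherwise $x$ is matched in $M$ to some $y\in N_T(x)$, i.e.\ $y=a$ or $y=\ell_i$ for some $i$. In either case, no other edge of $M$ touches $x$ (since $M$ is a matching), and no edge of $M$ can touch any other $\ell_j$ (whose only neighbour $x$ is already matched to $y$). Consequently every edge of $M\setminus\{\{x,y\}\}$ lies entirely within $V(T^*)$, so it is a matching of $T^*$ of size $|M|-1$, giving $|M|\leq \m(T^*)+1$.

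The only delicate point, and the one I would take care to justify cleanly, is the assertion that $x$ must possess at least one leaf neighbour: this is what guarantees both that the edge $\{x,\ell_1\}$ used in the lower bound exists and that in the upper bound the leaves $\ell_j$ stripped away lie outside any edge of $M\setminus\{\{x,y\}\}$. Combining the two inequalities gives $\m(T)=\m(T^*)+1$.
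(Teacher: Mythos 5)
Your proof is correct and follows essentially the same strategy as the paper's: the lower bound is obtained by adding a pendant edge $\{x,\ell\}$ to a maximum matching of $T^*$, and the upper bound by removing from a maximum matching $M$ of $T$ whichever edge covers $x$. The paper's proof splits the upper bound into the two cases ``$M$ contains an edge of the star $T[N[x]\setminus\{a\}]$'' and ``$M$ contains $(x,a)$,'' and then dispatches the remaining possibility with the remark that it is ``covered by one of these two cases.'' Your version is slightly more careful on this point: you explicitly treat the case where $x$ is unmatched in $M$ and observe that then all pendant neighbours of $x$ are also unmatched, so $M$ is already a matching of $T^*$. You also make explicit the structural fact that a saddle vertex $x$ is not a leaf of $T$ (it has degree $1$, not $0$, in the leaf-stripped graph $H$), hence $d_T(x)\geq 2$ and $x$ has at least one pendant neighbour; this is used implicitly in the paper's proof where it simply picks ``a leaf neighbour $u$ of $x$.'' These are presentational refinements rather than a different argument.
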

\begin{proof}
Assume that $x$ is a saddle vertex of $T$, and let $a$ be the unique non-leaf neighbour of $x$. 
Suppose further that $M^*$ is a maximum matching of $T^*$. Let $u$ be a leaf neighbour of $x$. Clearly, the set $M^*\cup\{(u,x)\}$ 
is a matching of $T$ so that $\m(T)\geq \m(T^*)+1$. Let $M$ be a maximum matching of $T$. 
If $M$ contains an edge from the subgraph $G[N[x]\setminus\{a\}]$, say the edge $(u,x)$, then $M\setminus \{(u,x)\}$ 
is a matching of $T^*$; hence, $\m(T^*)\geq |M|-1=\m(T)-1$. If $M$ contains the edge $(x,a)$, 
then $M\setminus \{(x,a)\}$ is a matching of $T'$, where $T'$ is a subgraph of $T^*$ obtained by deleting $N[x]$ from $T$ . 
Thus we have $\m(T^*)\geq\m(T')\geq |M|-1=\m(T)-1$. Finally, note that all the other possibilities are covered by one of these two cases. 
\end{proof}
\begin{thm}\cite{MT,MaT}
The dominance complex $\Dom(F)$ of a forest $F$ is homotopy equivalent to a sphere of dimension $\m(F)-1$.
\end{thm}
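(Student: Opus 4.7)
The plan is to reduce the statement to the chordal case that was just established, and then identify the two relevant graph invariants. Explicitly: every forest $F$ is vacuously chordal, since it contains no induced cycle whatsoever, in particular none of length greater than $3$. Therefore the preceding theorem applies and yields
\begin{equation*}
\Dom(F)\simeq S^{\ve(F)-1}.
\end{equation*}
Once this is in hand, it remains only to prove that $\ve(F)=\m(F)$ for every forest $F$, which upgrades the sphere dimension from $\ve(F)-1$ to $\m(F)-1$ and finishes the argument.

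For this equality I would invoke K\"onig's theorem: a forest contains no cycles at all, hence no odd cycles, so $F$ is bipartite, and for bipartite graphs the maximum matching number equals the minimum vertex covering number. This one-line argument is the cleanest route and keeps the proof entirely in the spirit of the preceding chordal result.

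If a self-contained argument is preferred that stays within the saddle-vertex framework already set up in the paper, one can instead prove $\ve(F)=\m(F)$ by induction on the number of vertices, using Lemma~\ref{T7}. In the base case $F$ is a disjoint union of isolated vertices and single edges, where $\ve(F)=\m(F)$ is immediate. For the inductive step, when $F$ has a saddle vertex $x$ with unique non-leaf neighbour $a$, Lemma~\ref{T7} gives $\m(F)=\m(F^*)+1$ with $F^*=F\setminus(N[x]\setminus\{a\})$; an analogous cover-side argument shows $\ve(F)=\ve(F^*)+1$, since some optimal vertex cover can be chosen to contain $x$ (covering all pendant edges at $x$ and the edge $(x,a)$) and to contain no other vertex of $N[x]\setminus\{a\}$, so removing $x$ yields an optimal cover of $F^*$.

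The main potential obstacle is the bookkeeping around isolated vertices and components in $F^*$ when carrying out the induction, but since the combinatorial identities $\ve(F)=\ve(F^*)+1$ and $\m(F)=\m(F^*)+1$ both hold under exactly the same saddle-vertex hypothesis, the two induction arguments run in parallel and the answer drops out. In short, the substantive work has already been done in the chordal theorem; the remaining content is just the classical bipartite identity $\m=\ve$ applied to forests.
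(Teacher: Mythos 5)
Your proposal is correct, and it takes a genuinely different and shorter route than the paper. The paper re-runs the matching-tree machinery from scratch: it picks a leaf neighbour $u$ of a saddle vertex $x$ as pivot, observes that $\overline{\DE}(u)=\{\{x\}\}$ so there is a unique child, and reduces to $\Dom(T^*)$ with $T^*=T\setminus(N[x]\setminus\{a\})$, finishing with Lemma~\ref{T7} to convert the inductive count into $\m(T)$. You instead observe that a forest is vacuously chordal, so the chordal theorem $\Dom(G)\simeq S^{\ve(G)-1}$ applies immediately, and then invoke K\"onig's theorem ($\m=\ve$ for bipartite graphs, in particular for forests) to restate the dimension. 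This is cleaner, avoids a second matching-tree construction, and makes explicit a consistency check the paper never spells out: the chordal and forest theorems must agree on forests, which is exactly K\"onig's identity. Your fallback induction ($\ve(F)=\ve(F^*)+1$ in parallel with $\m(F)=\m(F^*)+1$) is also essentially sound, though it requires a little care when $x$ has no non-leaf neighbour (i.e.\ $T$ is a star) and when $d_T(x)=2$; since both identities follow at once from K\"onig, there is no reason to prefer that route. The only thing to flag is that the validity of your argument rests entirely on the correctness of the preceding chordal theorem, whereas the paper's proof is logically independent of it; this is probably why the author chose to give a separate argument.
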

\begin{proof} 
It is enough to verify this for a tree $T$. Suppose that $x$ is a saddle vertex of $T$, and let 
the vertex $u$ in $T$ be a leaf neighbour of $x$. We choose $u$ as a pivot. Then the family $\overline{\DE}(u)$ has only 
one element, that is, $\overline{\DE}(u)=\{\{x\}\}$. Therefore, $\Sigma(\emptyset,\{\emptyset\})$ has a unique child $\Sigma(A,\mathcal{B})$, 
where $A=\{x\}$ and $\mathcal{B}$ consists of the sets of minimal elements of 
$$\mathcal{B}'=\{N[y]\cap (V\setminus N[u])\colon y\;\textrm{is adjacent to}\;x\}\cup\{\{u\}\}.$$ 
Now let $T^*$ 
be the graph obtained by deleting $N[x]\setminus \{a\}$, where $a$ is the unique non leaf neighbour of $x$. 
Then the elements in $\Sigma(A,\mathcal{B})$ can be obtained by adding $\{x\}$ to the unmatched 
elements of $\Dom(T^*)$. Since the complex $\Dom(T^*)$ has a unique unmatched element of dimension $\m(T^*)$ by the induction, 
we conclude that $\Dom(T)$ has a unique unmatched element of dimension $\m(T^*)+1$, which is equal to $\m(T)$ by Lemma~\ref{T7}. 
\end{proof} 

\section*{Acknowledgement}
I would like to thank Professor Yusuf Civan for introducing me the devoid simplicial complexes and for his invaluable comments and generous encouragement.


\end{document}